\def\XXint#1#2#3{{\setbox0=\hbox{$#1{#2#3}{\int}$}
     \vcenter{\hbox{$#2#3$}}\kern-.5\wd0}}
\theoremstyle{plain}
\newtheorem{lemma}{Lemma}[section]
        \newtheorem{proposition}[lemma]{Proposition}
        \newtheorem{theorem}[lemma]{Theorem}
        \newtheorem{definition}{Definition}[section]
        \newtheorem{remark}[lemma]{Remark}
\newtheorem{nota}{Notation}
\theoremstyle{definition}
\numberwithin{equation}{section}
\newcommand {\R} {\mathbb{R}}
\newcommand {\N} {\mathbb{N}}
\newcommand {\C} {\mathbb{C}}
\newcommand{\diff}{\hspace{0.3em}\mathrm{d}}
\newcommand {\p} {\partial}
\newcommand{\ep}{\varepsilon}
\newcommand{\la}{\langle}
\newcommand{\ra}{\rangle}
\newcommand{\Hdistr}{H^{-1\slash2}_{00}(\Sigma)}
\newcommand{\Htrace}{H^{1\slash2}_{00}(\Sigma)}
\newcommand{\GG}{\mathbf{G}}
\newcommand{\uu}{\mathbf{u}}
\newcommand{\vv}{\mathbf{v}}
\newcommand{\ww}{\mathbf{w}}
\newcommand{\CC}{\mathcal{C}}
\title{ {\bf The local complex Calder\'on problem.\\ Stability in a layered medium\\ for a special type of anisotropic admittivity}}
\author{Sonia Foschiatti \thanks{Faculty of Mathematics, University of Vienna, Austria. Email:sonia.foschiatti@univie.ac.at.}\qquad Romina Gaburro\thanks{Department of Mathematics and Statistics, University of Limerick, Ireland.  Email: romina.gaburro@ul.ie}\qquad\\ Eva Sincich\thanks{Dipartimento di Matematica, Informatica e Geoscienze, Universit\`{a} di Trieste, Italy. Email:esincich@units.it.}}}
\author{Sonia Foschiatti\thanks{Faculty of Mathematics, University of Vienna, Austria.  E-mail: \textsf{sonia.foschiatti@univie.ac.at}} \ \ \ \ Romina Gaburro\thanks{Department of Mathematics and Statistics, CONFIRM-Science Foundation Ireland, Health Research Institute (HRI), University of Limerick, Ireland.   E-mail: \textsf{romina.gaburro@ul.ie}}  \ \ \ \ Eva
Sincich\thanks{Dipartimento di Matematica e Geoscienze,
Universit\`a degli Studi di Trieste, Italy. E-mail: \textsf{esincich@units.it}} 
}
\date{}
\begin{document}

\maketitle

\begin{center}
\noindent \textbf{Abstract} 
\end{center}

 We deal with Calder\'on's problem in a layered anisotropic medium $\Omega\subset\mathbb{R}^n$, $n\geq 3$, with complex anisotropic admittivity $\sigma=\gamma A$, where $A$ is a known Lipschitz matrix-valued function. We assume that the layers of $\Omega$ are fixed and known and that $\gamma$ is an unknown affine complex-valued function on each layer. We provide H\"{o}lder and Lipschitz stability estimates of $\sigma$ in terms of an ad hoc misfit functional as well as the more classical Dirichlet to Neumann map localised on some open portion $\Sigma$ of $\partial\Omega$, respectively.

\medskip

\medskip
 
\noindent \textbf{Mathematical Subject Classifications (2010)}: Primary: 35R30; Secondary: 35J25, 35J47.

\medskip

\medskip

\noindent \textbf{Key words}:  Anisotropic Calder\'on's problem, complex admittivity, stability, misfit functional.


\section{Introduction}
We consider the inverse problem of determining the (complex) anisotropic \textit{admittivity} $\sigma$ of a domain $\Omega\subset\mathbb{R}^n$ from infinite measurements of voltage and electric current taken on a portion $\Sigma$ of the boundary of $\Omega$, $\partial\Omega$.
The electrostatic potential $u$ in $\Omega$, is governed by the equation
\begin{equation}\label{eq conduttivita'}
Lu=\mbox{div}(\sigma\nabla{u}) = 0 \qquad \mbox{in} \quad \Omega ,
\end{equation}
where the (possibly anisotropic) electric admittivity at frequency $\omega$ is given by the complex-symmetric matrix valued function 
\begin{equation}\label{complex sigma}
\sigma(x)=\sigma^r(x) + i\omega\sigma^i(x),\qquad x\in\Omega, 
\end{equation}
where $\sigma^r$ is the \textit{conductivity} of $\Omega$ and $\sigma^i$ is the \textit {permittivity} of $\Omega$. 

This problem, also known as Electrical Impedance Tomography (EIT) or Calde-r\'on's problem, arises in many different fields such as geophysics, medicine, and nondestructive testing of materials. When $\sigma$ is real ($\omega=0$, hence $\sigma =\sigma^r$), the inverse problem is known as the inverse conductivity or direct current problem and its first mathematical formulation is due to Calder\'{o}n \cite{C}, where he addressed the problem of whether it is possible to determine the (isotropic) conductivity $\sigma = \gamma I$ by the Dirichlet to Neumann (D-N) map
$$
\Lambda_{\sigma}:\,{H}^{\frac{1}{2}}(\partial\Omega)\,\ni\,u\vert_{\partial\Omega}\,
\rightarrow\,{\sigma}\nabla{u}\cdot\nu\vert_{\partial\Omega}\in{H}^{-\frac{1}{2}}(\partial\Omega), 
$$
where $\nu$ denotes the outward unit normal to the boundary $\partial\Omega$. This seminal paper opened the way to the solution to the uniqueness issue where one is asking whether the \textit{real} conductivity $\sigma$ can be determined by the knowledge of $\Lambda_{\sigma}$. As main contributions in this respect we mention the papers by Alessandrini \cite{A1}, Kohn and Vogelius \cite{Koh-V1, Koh-V2}, Nachman \cite{N}, and Sylvester and Uhlmann \cite{Sy-U}. We also refer to \cite{Bo, U} for an overview on this inverse problem.

The complex case has been less studied to date and in this respect we recall the uniqueness results of \cite{Bu, Fr} in the two-dimensional setting, and that of Lipschitz stability in \cite{BerFra11}, all concerning the isotropic case. We wish also to emphasise that a complex-valued $\sigma$ as in \eqref{complex sigma} that takes into account not only the conductivity $\sigma^r$ but also the permittivity $\sigma^i$ in $\Omega$ at a fixed frequency $\omega$, is more realistic than the real case ($\sigma = \sigma^r$), as both $\sigma^r$ and $\sigma^i$ are key to the materials' characterization within $\Omega$ through external electric fields.

Conductors have a high electrical conductivity $\sigma^r$, allowing both direct and alternating currents to flow through them, while dielectric materials have a high permittivity $\sigma^i$ and only allow for the alternating current to flow through them, therefore a complex $\sigma$ that takes into account both the $\sigma^r$ and $\sigma^i$ seems more appropriate. Moreover, while alternating currents are commonly used in geophysical and process monitoring applications, they are essential in medical imaging, where the direction of the injected current must be reversed after a short enough time from injection to avoid electrochemical effects (see \cite{AdGabLio15} and references within it). In medical applications, the injection of currents in one direction without reversion, would also result in the transport of ions, causing the stimulation of nerves. 

In EIT an oscillator is used to produce a sinusoidal current at a fixed frequency $\omega$. The response of the observed material may also vary with different frequencies. If one considers biological cells, higher frequency current might be able to penetrate their capacitive membrane, while lower frequencies currents will only be able to pass around the cells. The measurements resulting from employing different frequencies might therefore have an impact on the resulting reconstructed EIT image.

For the alternating current, a common accepted starting mathematical model is time-harmonic Maxwell equations at a fixed angular frequency $\omega$. Other inverse problems are modelled by the time-harmonic Maxwell equations, including optical tomography and scattering problems. In complex EIT, their reduction to the model \eqref{eq conduttivita'}-\eqref{complex sigma} considered here is justified by assuming that the so-called transient components of all fields are negligible (i.e., that measurements are taken after some settling time), that the effects of the magnetic field are negligible, and that $\omega$ is relatively low (see \cite{AdGabLio15} for a derivation of \eqref{eq conduttivita'}-\eqref{complex sigma}).

Another feature of the present paper is the presence of anisotropic admittivity. This choice is motivated by the fact that anisotropy is commonly present in nature, e.g. in the human body; in the theory of homogenization; as a result of deformation of an isotropic material. 

However, with the counterexample by Tartar \cite{Koh-V1} the general anisotropic case still poses several unresolved issues.  A line of research, since the seminal paper of Lee and Uhlmann \cite{Le-U}, investigates the determination of $\sigma$ modulo a change of variables which fixes the boundary (see \cite{La-U, La-U-T} ). In many applications, however, knowledge of position and, hence, coordinates (variables) are important. In this direction we refer to \cite{A}, \cite{A-G1}, \cite{A-G2}, \cite{AGS}, \cite{Koh-V1}, \cite{Lio} and the line of research initiated in \cite{AleVes05}.

Here we address the issue of stability for the inverse problem at hand. As is well known (see \cite{A}), even in the isotropic case, the optimal stability of $\sigma$ with respect to $\Lambda_{\sigma}$ or its local version $\Lambda_{\sigma}^{\Sigma}$, is logarithmic (see also \cite{B-B-R, B-F-R, Liu, Ma}). On the other hand, Lipschitz stability can be restored if $\sigma$ is deemed to belong to a finite dimensional space \cite{AleVes05} (see also \cite{AleDeHGabSin17, AleDeHGabSin18, BerFra11, FosGabSin21, G-S, RS1} and \cite{A-dH-G-S1, Be-dH-F-S, Be-dH-Q, Be-Fr-Mo-Ro-Ve, Be-Fr-V, dH-Q-S, dH-Q-S1, Fos23, RS2}) for related inverse problems. We also recall the uniqueness results of \cite{Al-dH-G, D1, D2, D3} in the case of piecewise constant conductivities. The results mentioned above only concern the real case.

Following this latter line of research, we consider the case when the conductivity is anisotropic and \textit{a-priori} known to be of type $\sigma = \gamma A$, where $A$ is a known Lipschitz continuous matrix-valued function on $\Omega$ and $\gamma$ is a piecewise-affine unknown complex-valued function on a given partition of $\Omega$ (a precise formulation is give in section \ref{sec: apriori}). Our setting is the one of \cite{AleDeHGabSin18}, in which $\Omega$ is covered by a finite family of known nested domains $\Omega_N\Subset\Omega_{N-1}\Subset \dots \Subset \Omega_1\Subset \Omega_0 = \Omega$, with $C^{1,\alpha}$ boundaries $\p\Omega_m$, for $\alpha\in(0,1]$, for $m=0,\dots , N$. Within this setting, we assume that the unknown conductivity $\sigma$ has the structure
\begin{equation}\label{a priori info su sigmaj}
\sigma(x)=\sum_{m=1}^{N+1}\gamma_{m}(x)\,\chi_{D_m}(x)\,A(x),\qquad\mbox{for any}\:x\in\Omega,
\end{equation}
where $\gamma_{m}(x)$ is an unknown affine complex scalar function on $D_m$, $A$ is a known Lipschitz continuous real matrix-valued function on $\Omega$ and $\{D_m\}_{m=1}^N$ is a given layered partition of $\Omega$ defined as follows 
\begin{eqnarray*}
D_m=\Omega_{m-1}\setminus \overline{\Omega}_m,\quad\text{for}\quad m=1,\dots,\:N\quad\text{and}\quad D_{N+1}=\Omega_N.
\end{eqnarray*}
We also assume that the \textit{jump} or \textit{visibility condition}
\begin{equation*}
\gamma_m \neq \gamma_{m+1},\quad \textnormal{for}\quad m=1,\dots , N-1,
\end{equation*}
is satisfied. In the above framework, we establish a classical Lipschitz stability estimate for $\sigma$ with respect to the local D-N map, localised on an open portion $\Sigma\subset\partial\Omega$, $\Lambda_{\sigma}^{\Sigma}$ (the precise formulation is given in \eqref{localDN}). 

Our stability result for complex anisotropic admittivities of type \eqref{a priori info su sigmaj} is also presented in terms of a misfit functional that was already introduced in the real case in \cite{FosGabSin21} (we also refer to \cite{A-dH-F-G-S} where a similar misfit functional was originally exploited in the context of an inverse problem for the Helmholtz equation). Here, as measurements are locally taken on an open portion $\Sigma\subset\partial\Omega$, we conveniently enlarge the physical domain $\Omega$ to an augmented domain $\widetilde\Omega_0$ and consider Green's functions $G_i$ for $\mbox{div}(\sigma^{(i)}\nabla\cdot)$ in $\widetilde\Omega_0$, for $i=1,2$, with poles $y,z\in\widetilde\Omega_0\setminus\bar\Omega$ respectively. The error's measurements is then expressed via the misfit functional
\begin{equation}\label{misfitfun}
\mathcal{J}(\sigma^{(1)},\sigma^{(2)})=\int_{D_y\times D_z} \left|S_0(y,z)\right|^2 \diff y \diff z,
\end{equation}
where $D_y$, $D_z \Subset (\widetilde\Omega_0\setminus\bar\Omega)$ and
\begin{equation*}
S_0(y,z) = \int_{\Sigma} \left[ G_2(\cdot,z)\sigma^{(1)}(\cdot)\nabla G_1(\cdot,y)\cdot\nu - G_1(\cdot,y)\sigma^{(2)}(\cdot)\nabla G_2(\cdot,z)\cdot\nu\right]\:\diff S.
\end{equation*}
Our stability estimate of $\sigma$ in terms of \eqref{misfitfun} is of H\"older type:
\begin{equation}\label{stabilita' globale intro}
	\|\sigma^{(1)}-\sigma^{(2)}\|_{L^{\infty}(\Omega)}\leq C
	\left(\mathcal{J}(\sigma^{(1)},\sigma^{(2)})\right)^{1\slash 2},
	\end{equation}
where $C>0$ is a constant that depends on the \textit{a-priori} information only. The augmented domain $\widetilde\Omega_0$ is chosen in such a way that $G_i(\cdot, y)\big|_{\partial\Omega}$, for $i=1,2$ is supported in $\Sigma$ in the trace sense (see \eqref{localDN}). Similarly to the real case treated in \cite{FosGabSin21}, the Cauchy data of type $\left\{G(\cdot , y)\big|_{\partial\Omega},\: \sigma\nabla G(\cdot , y)\cdot\nu \big|_{\partial\Omega}\right\}$, with $y\in\widetilde\Omega_0\setminus\bar\Omega$, is sufficient to stably determine the complex $\sigma$ treated here as well. The existence of the Green's function for $L$ as defined in \eqref{eq conduttivita'} - \eqref{a priori info su sigmaj} in the whole augmented domain $\widetilde\Omega_0$, is based on \textit{a-priori} regularity estimates for the underlying strongly elliptic system with piecewise smooth coefficient. To this end, we invoke the existence theory developed by Hofmann and Kim \cite{HofKim07} based on the condition of local H\"older continuity of the solutions to \eqref{eq conduttivita'} that guarantees the existence of the Green's function for the system in \eqref{eq conduttivita'}. In our setting, such condition is satisfied due to the \textit{a-priori} bound of Li and Niremberg \cite{LiNir03}. Details can be found in Section \ref{sec: preliminary}. In this respect we recall the geometric configuration considered in \cite{BerFra11}, where the authors constructed Green's type of functions for the forward operator in \eqref{eq conduttivita'} with poles placed far from the points where a H\"older continuity bound (for the solution) was not at hand. 

The choice in the present paper of considering anisotropic conductivities with the piecewise regularity as in \eqref{a priori info su sigmaj} on a given partition of $\Omega$, which contiguous sets arise between the interfaces of a family of nested domains, is inspired by \cite{AleDeHGabSin18}, where some of the authors of the present paper simultaneously determined, in this geometric setting, the anisotropic real piecewise constant conductivity and the interfaces of the partition of $\Omega$ by a local boundary map. Here we quantify the unique determination of the conductivity $\sigma$ in the case when $\sigma$ is complex and as in \eqref{a priori info su sigmaj}.

The paper is organized as follows. In \ref{section2} we introduce the main assumptions on the domain $\Omega$ and the anisotropic conductivity $\sigma$. Section \ref{section2} also contains the formal definition of the local D-N map (\eqref{localDN}) and the statement of our main result, the Lipschitz stability estimate in terms of the local D-N map (\ref{thm: stability estimate}). Section \ref{sec: preliminary} is devoted to the introduction of some technical tools of asymptotic estimates for the Green's function (\ref{thm: asymptotic}) and propagation of smallness (\ref{prop: uc}) needed for the machinery of the proof of \ref{thm: stability estimate}. Section \ref{sec4} is dedicated to the proof of \ref{thm: stability estimate}. In \ref{section5} we conclude with final remarks where we introduce the misfit functional and provide a H\"older stability estimate in terms of it (\ref{teorema misfit}). The Appendix contains the proofs of \ref{thm: asymptotic} and \ref{prop: uc}.

\section{Main result}\label{section2}

In this section, we state the Lipschitz stability estimate for a special class of complex conductivities with real-valued anisotropy. We begin by defining the notation, the local Dirichlet-to-Neumann map and the Alessandrini's identity. We proceed by introducing the a-priori assumptions about the domain and the coefficient. The Lipschitz stability estimate will be presented in Theorem \ref{thm: stability estimate}.
        
\setcounter{equation}{0}

\subsection{Notation and definitions}

\begin{nota}
Given $x\in \R^n$, $n\geq3$, we shall denote $x=(x',x_n)$, where $x'=(x_1,\dots,x_{n-1})\in\mathbb{R}^{n-1}$, $x_n\in\mathbb{R}$. Given $x\in \mathbb{R}^n$, we shall use the following notation for balls and cylinders:
\begin{align*}
B_r(x) &= \{y\in \R^n\,:\,|y-x|<r\},\quad B_r=B_r(0),\\
B'_r(x') &= \{y\in \R^{n-1}\,:\,|y'-x'|<r\},\quad B'_r=B'_r(0),\\
Q_{a,b}(x) &= \{(y',y_n)\,:\,|y'-x'|<a,\,|y_n-x_n|<b\},\quad Q_{a,b}=Q_{a,b}(0).
\end{align*}
Let $\R^n_{\pm} = \{ (x',x_n)\in \R^{n-1}\times \R \,:\, x_n\gtrless 0 \}$ be the positive (negative) real half space, and $B^{\pm}_r = B_r\cap \R^n_{\pm}$ be the positive (negative) semisphere centered at the origin.
\end{nota}

\begin{definition}\label{def: Holderboundary}
Let $\Omega$ be a bounded domain in $\mathbb R^n$. Given $\alpha$, $\alpha\in (0,1]$, the boundary $\partial\Omega$ is said to be of class $C^{1,\alpha}$ with positive constants $r_0$ and $M_0$, if for any point $P\in\p\Omega$ there exists a rigid transformation of coordinates under which $P=0$ and
\[
\Omega\cap B_{r_0}=\{x\in B_{r_0}\,:\,x_n>\varphi(x')\},
\]
where $\varphi$ is a $C^{1,\alpha}$  function on $B'_{r_0}$ satisfying
\begin{align*}
\varphi(0) = |\nabla_{x'} \varphi(0)| = 0,\quad
\|\varphi\|_{C^{1,\alpha}(B'_{r_0})} \leq M_0\,r_0.
\end{align*}
\end{definition}
\begin{remark}
To ensure rigour in the proofs, we adopt the convention of normalising norms. For instance, the $C^{1,\alpha}$ norm appearing above is meant as follows:
\[
\|\varphi\|_{C^{1,\alpha}(B'_{r_0})} = \|\varphi\|_{L^{\infty}(B'_{r_0})} + r_0 \|\nabla \varphi\|_{L^{\infty}(B'_{r_0})} + r_0^{1+\alpha} |\nabla \varphi|_{\alpha,B'_{r_0}},
\]
where
\[
|\nabla \varphi|_{\alpha,B'_{r_0}} = \sup_{\substack{x',y'\in B'_{r_0}\\ x'\neq y'}} \frac{|\nabla \varphi(x')-\nabla \varphi(y')|}{|x'-y'|^{\alpha}}.
\]
\end{remark}

\begin{definition}\label{def: flat portion}
Let $\Omega$ be a bounded domain in $\mathbb R^n$. A non-empty boundary portion $\Sigma$ of $\partial\Omega$ is said to be a flat portion of size $r_0>0$ if for any point $P\in\Sigma$ there exists a rigid transformation of $\R^n$ under which $P=0$ and positive real numbers $a,b<r_0$ such that
\begin{align*}
\Sigma\cap Q_{a,b} &=\{(x',x_n)\in Q_{a,b}\,:\,x_n=0\},\\ 
\Omega\cap Q_{a,b} &=\{(x',x_n)\in Q_{a,b}\,:\,x_n>0\}\\
(\R^n\setminus \Omega) \cap Q_{a,b} &=\{(x',x_n)\in Q_{a,b}\,:\,x_n<0\}.
\end{align*}
\end{definition}
\begin{nota}
For $r>0$, we denote
\begin{align}
\Omega_r(y) &= \Omega \cap B_r(y), \qquad \textnormal{for any }y\in \Omega,\label{eqn: conv1}\\
(\Omega)_r &= \{x\in \R^{n}\,:\,\textnormal{dist}(x, \p\Omega)\geq r \}\label{eqn: conv2}.
\end{align}

\end{nota} 

\begin{nota}
For any $\vv, \ww \in \C^n$, with $\vv=(v_1,\dots,v_n)$, $\ww=(w_1,\dots,w_n)$, we denote by $\vv\cdot \ww$ the holomorphic continuation of the Euclidean scalar product, which is a bilinear form without the complex conjugation of the second vector:
\[
\vv\cdot \ww = \sum_{k=1}^n v_k\,w_k.
\]
\end{nota}

\subsection{The a-priori assumptions}\label{sec: apriori}

\paragraph{Assumptions about the domain.}

We assume that $\Omega\subset \R^n$ is a bounded, measurable domain with boundary $\p\Omega$ of class $C^{1,\alpha}$ with positive constants $r_0$ and $M_0$ as per Definition \ref{def: Holderboundary}. We assume that	
\begin{equation*}
	\lvert\Omega\rvert\leq \hat{C} r_0^n,
\end{equation*}	
where $\lvert\Omega\rvert$ is the Lebesgue measure of $\Omega$ and $\hat{C}$ is a suitable positive constant. We assume that there is an open non-empty portion $\Sigma$ of $\p\Omega$ (where the measurements in terms of the DtoN map are taken) such that $\Sigma$ is flat. \\
Let $N$ be a positive integer. We assume that $\Omega_0, \Omega_1, \dots, \Omega_N$ is a finite collection of nested domains satisfying the following conditions:
\begin{enumerate}
	\item $\Omega_0=\Omega$ and $\Omega_{N+1}=\emptyset$;
	\item The nested domains satisfy the following condition:
	\[
	\Omega_N\Subset\Omega_{N-1}\Subset \dots \Subset \Omega_1\Subset \Omega_0.
	\]
	We assume that the boundaries $\p\Omega_m$ are of class $C^{1,\alpha}$ for $\alpha\in(0,1]$;
	\item We define the layers 
	\begin{align*}
		D_m &= \Omega_{m-1}\setminus \overline{\Omega}_m\quad \text{for }m=1,\dots,N,\\
		D_{N+1} &=\Omega_N,
	\end{align*}
	and we assume that these sets are connected;
	\item For each layer $D_m$, we assume that there is a non-empty flat portion $\Sigma_m$ contained in $\p\Omega_m$ of size $r_0/3$.
\end{enumerate}
\begin{remark}
	The assumption of considering flat portions serves to enhance readability of the work. Indeed, it is feasible to introduce a local diffeomorphism that flattens a $C^{1,\alpha}$ portion of the boundary. Moreover, the flat portion assumption aligns well with the assumption of a piecewise affine conductivity on the nested partition of $\Omega$ in view of the proof of the main result of this work.
\end{remark}

\paragraph{Assumptions about the anisotropic complex conductivity.}

The anisotropic complex conductivity $\sigma$ is a complex, bounded, measurable $n\times n$ matrix function of the form
\begin{equation}\label{eqn: complex-cond}
	\sigma(x) = \sigma^r(x) + i\sigma^i(x) = (\gamma^r(x) + i\gamma^i(x)) A(x),
\end{equation}
where the real part $\gamma^r(x)$ and the imaginary part $\gamma^i(x)$ are piecewise affine real functions defined on $\Omega$ as follows:
\begin{align*}
	\gamma^r(x) &= \sum_{m=1}^{N+1} \gamma^r_m(x) \chi_{D_m}(x) = \sum_{m=1}^{N+1} (s_m^r + S_m^r\cdot x)\chi_{D_m}(x), \ \ &\textnormal{for }s_m^r\in \R,\quad S_m^r\in \R^n,\\
	\gamma^i(x) &= \sum_{m=1}^{N+1} \gamma^i_m(x) \chi_{D_m}(x) = \sum_{m=1}^{N+1} (s_m^i + S_m^i\cdot x)\chi_{D_m}(x), \ \ &\textnormal{for }s_m^i \in \R,\quad S_m^i\in \R^n,
\end{align*}
where $A$ is a real $n\times n$ symmetric matrix function, and $D_m$, for $m=1,\dots,N+1$ are the nested domains. We also assume that:
\begin{itemize}
	\item[a)] There is a positive constant $\bar{\gamma}>1$ such that
	\begin{equation*}
		\bar{\gamma}^{-1}\leq \gamma^r(x),\,\,\text{and}\quad \lvert \gamma(x)\rvert \leq  \bar{\gamma} , \qquad \mbox{for any}\,x\in \Omega.
	\end{equation*}
	\item[b)] \emph{Lipschitz continuity of $A$}. We assume that $A\in C^{0,1}(\Omega,Sym_n)$, and that there exists a constant $\bar{A}>0$ such that
	\begin{equation}\label{eqn: Anorm}
		\|A\|_{C^{0,1}(\Omega,Sym_n)}\leq \bar{A}.
	\end{equation}
	\item[c)] \emph{Uniform ellipticity condition}. There exists a constant $\lambda>1$ such that
	\begin{equation}\label{eqn: elliptic condition}
		\lambda^{-1} |\xi|^2 \leq \sigma^r(x) \xi\cdot \xi \leq \lambda |\xi|^2, \qquad \mbox{for every}\, \xi\in \R^n,\, \mbox{for a. e. } x\in \Omega.
	\end{equation}
	\item[d)] \emph{Visibility condition}. For any $m\in \{1,\dots,N+1\}$,
	\[
	\gamma_{m-1}^r \neq \gamma_m^r \quad \text{and}\quad \gamma_{m-1}^i \neq \gamma_m^i.
	\]
\end{itemize}
The complex conductivity equation
\begin{equation}\label{eqn: conductivity equation}
	\mbox{div}(\sigma\nabla u) = 0,\qquad \textnormal{in }\Omega,
\end{equation}
is equivalent to the $2\times 2$ real system for the vector valued function
\begin{equation}\label{eqn: uvector}
	\uu = (u^r, u^i)^T=(u^1,u^2)^T,
\end{equation}
of the form
\begin{equation*}
	\begin{cases}
		\mbox{div}(\sigma^r\nabla u^1 - \sigma^i \nabla u^2) = 0,\\
		\mbox{div}(\sigma^i\nabla u^1 + \sigma^r \nabla u^2) = 0,
	\end{cases}
\end{equation*}
that can be written in a compact form as
\begin{equation}\label{eqn: conductivity system}
	\mbox{div}(\CC(x)\nabla \uu)=0,\qquad \textnormal{in }\Omega,
\end{equation}
where $\CC$ is the tensor
\begin{equation}\label{eqn: tensorC}
	\CC(x) =
	\begin{pmatrix}
		\sigma^r(x) &-\sigma^i(x)\\
		\sigma^i(x) &\sigma^r(x)
	\end{pmatrix}
	.
\end{equation}

In components, \eqref{eqn: conductivity system} becomes
\begin{equation*}
	\sum_{l=1}^2 \sum_{h,k=1}^n \p_{x_h}(C^{hk}_{jl}\p_{x_k}u^l)=0 \quad \text{for }j=1,2, \quad \text{in }\Omega,
\end{equation*}
applying the convention \eqref{eqn: uvector}, the tensor $\CC=\{C^{hk}_{jl}\}$ is given by
\begin{equation*}
	C^{hk}_{jl} = (\sigma^r)^{hk}\delta_{jl} - (\sigma^i)^{hk}(\delta_{j1}\delta_{l2}-\delta_{j2}\delta_{l1}).
\end{equation*}
By a straightforward calculation, by \eqref{eqn: elliptic condition}, it follows that
\[
\CC(x)\xi\cdot\xi = \sigma^r(x) \xi_1\cdot\xi_1 + \sigma^r(x) \xi_2\cdot\xi_2, \qquad \forall \xi=(\xi_1,\xi_2)\in \R^{2n},
\]
therefore $\CC$ satisfies a uniform ellipticity condition, namely for $\lambda>1$,
\begin{equation*}
	\lambda^{-1}|\xi|^2\leq \sum_{j,l=1}^2 \sum_{h,k=1}^n C^{hk}_{jl}(x) \xi_k^l\cdot \xi_h^j\leq \lambda |\xi|^2, \qquad \textnormal{for any }\xi\in \R^{2n}, \textnormal{ for a.e. }x\in\Omega.
\end{equation*}

\begin{definition}
	The set of positive constants $\{N, \hat{C}, r_0, M_0, \lambda, \bar{\gamma}, \bar{A}, n\}$ with $N\in\mathbb{N}$ and the space dimension $n\geq 3$, is called the \emph{a-priori data}.  
\end{definition}
Throughout the paper, several constants depending on the \textit{a-priori data} will appear, therefore we decide to denote them by $C, C_1,C_2\dots$, avoiding in most cases to point out their specific dependence on the a-priori data that may vary from case to case. 

\subsection{Formulation of the direct problem and local DtoN map}\label{localDN}
Consider $\Omega\subset\R^n$ a bounded domain. Let $\Sigma$ be a non-empty (flat) open portion of $\p\Omega$ of size $r_0$ as in Definition \ref{def: flat portion}. We denote with $H^1(\Omega)=H^1(\Omega,\C)$ the Sobolev space of complex-valued functions with square-integrable first order derivatives. Let
\[
H^{1/2}_{co}(\Sigma) = \left\{f\in H^{1/2}(\p\Omega,\C)\::\:\textnormal{supp}f\subset\Sigma \right\}.
\]
The trace space $H^{1/2}_{00}(\Sigma)$ is the closure of $H^{1/2}_{co}(\Sigma)$ with respect to the $H^{1/2}(\p\Omega)$-norm. The distributional trace space $H^{-1/2}_{00}(\Sigma)$ is the dual of the trace space $H^{1/2}_{00}(\Sigma)$.\\
For any $f\in \Htrace$, let $u\in H^1(\Omega)$ be the weak solution of the Dirichlet problem
\begin{equation}\label{eqn: dirichlet problem}
\begin{cases}
\ \mbox{div}(\sigma \nabla u)=0 &\textnormal{in }\,\Omega,\\
\ u=f & \textnormal{on }{\partial\Omega}.
\end{cases}
\end{equation}
The \textit{weak formulation} can be stated as follows: find a function $u\in H^1(\Omega)$ such that $u|_{\p\Omega}=f$ in the trace sense and
\begin{equation*}
\int_{\Omega} \sigma(x)\nabla u(x)\cdot \nabla \bar{\varphi}(x) \diff x = 0,\qquad \textnormal{for any }\varphi\in H^1_0(\Omega).
\end{equation*}
Let $B:H^1(\Omega) \times H^1(\Omega) \rightarrow \C$ be the sesquilinear form 
\[
B[u,v]:= \int_{\Omega} \sigma(x)\nabla u(x)\cdot \nabla \bar{v}(x) \diff x.
\]
Since $B$ is $\R$-linear in the first component, continuous, and coercive, by the Lax-Milgram theorem, we know that the weak solution $u\in H^1(\Omega)$ is unique (see \cite[pag. 261-262]{Kir05}).

\begin{definition}
The local Dirichlet-to-Neumann (DtoN) map $\Lambda^{\Sigma}_{\sigma}$ associated to the complex conductivity $\sigma$ and the flat portion $\Sigma$ is the operator
\begin{align*}
\Lambda_{\sigma}^{\Sigma}\,:\,H^{1\slash2}_{00}(\Sigma)\, &\rightarrow \,H^{-1\slash2}_{00}(\Sigma)\\
f \ \ \ \ &\mapsto \, \sigma\,\nabla u\cdot \nu\big|_{\p\Omega},\nonumber
\end{align*}
where $\nu$ is the outward unit normal of $\p \Omega$ and, for $f\in \Htrace$, $u\in{H}^{1}(\Omega)$ is the weak solution of the Dirichlet problem \eqref{eqn: dirichlet problem}.

The local DtoN map is characterized by the sesquilinear form 
\begin{equation*}
\la\Lambda_{\sigma}^{\Sigma}\:f,\:\bar{g}\ra\:=\:\int_{\:\Omega} \sigma(x)\: \nabla{u}(x)\cdot \nabla \varphi(x) \:dx,
\end{equation*}
where $g\in \Htrace$ and $\varphi\in H^{1}(\Omega)$ is any function such that $\varphi\vert_{\p\Omega}=g$ in the trace sense.  
\end{definition}

\begin{nota}
Here, $\la\cdot,\cdot\ra$ denotes the dual pairing between $\Hdistr$ and $\Htrace$ based on the $L^2(\p \Omega)$ inner product. We denote by $\parallel\cdot\parallel_{*}$ the $\mathcal{L}(\Htrace,\Hdistr)$-norm on the Banach space of
bounded linear operators from $\Htrace$ to $\Hdistr$, so that	
\[
\| \Lambda_{\sigma}^{\Sigma}\|_* = \sup \left\{ |\la\Lambda_{\sigma}^{\Sigma}\:f,\:g\ra|,\,f,g\in \Htrace, \\ \|f\|_{\Htrace} = \|g\|_{\Htrace}=1\right\}.
\]
\end{nota}

\begin{nota}
When we are dealing with two complex conductivities $\sigma^{(j)},$ $j=1,2$, we adopt the convention of writing $\Lambda_j^{\Sigma}$, to denote the corresponding DtoN map $\Lambda_{\sigma^{(j)}}^{\Sigma}$  for $j=1,2$.
\end{nota}
We introduce a well-known identity which will play a fundamental role in the derivation of the stability estimate.\\
Let $\sigma^{(1)}, \sigma^{(2)}$ be two complex conductivities satisfying the assumptions in Section \ref{sec: apriori}. Let $u_j\in H^1(\Omega)$ be the weak solution of the Dirichlet problem
\[
\textnormal{div}(\sigma^{(j)} \nabla u_j)=0 \qquad\textrm{$\textnormal{in } \Omega$},
\]
and assume that $u_j|_{\p\Omega}\in \Htrace$.
The following \emph{Alessandrini's identity} holds:
\begin{equation}\label{eqn: alessandrini}
\la (\Lambda_{1}^{\Sigma} - \Lambda_{2}^{\Sigma}) u_2|_{\p\Omega}, \bar{u}_1|_{\p\Omega} \ra = \int_{\Omega}  (\sigma^{(1)} - \sigma^{(2)}) \nabla u_1 \cdot  \nabla u_2\diff x,
\end{equation}
(see \cite[Formula (44) Appendix 6.3]{BerFra11} for a proof).

\begin{theorem}\label{thm: stability estimate}
Let $\Omega$ and $\{\Omega_m\}_{m=1}^N$ be the bounded domain and the nested domains with $C^{1,\alpha}$ class boundaries, $\alpha\in(0,1]$, respectively, satisfying the assumptions of Section \ref{sec: apriori}. Let $\sigma^{(j)}$, $j=1,2$ be two complex anisotropic conductivities of the form \eqref{eqn: complex-cond} satisfying the assumptions of Section \ref{sec: apriori}. Then there exists a positive constant $C$ depending only on the a-priori data such that
\begin{equation}\label{eqn: Lipschitz estimate}
\|\sigma^{(1)} - \sigma^{(2)} \|_{L^{\infty}(\Omega)} \leq C \|\Lambda_{1}^{\Sigma} - \Lambda_2^{\Sigma} \|_*.
\end{equation}
\end{theorem}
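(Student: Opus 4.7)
The strategy is to combine Alessandrini's identity \eqref{eqn: alessandrini} with suitable singular test functions built out of Green's matrices, and then to proceed by induction over the nested layers $D_1,\dots,D_{N+1}$, in the spirit of the real piecewise-constant scheme of \cite{AleDeHGabSin18} and of \cite{FosGabSin21}, but now for the complex system \eqref{eqn: conductivity system}. The overall goal in each inductive step is to construct a singular integrand whose leading-order asymptotics, obtained from Theorem \ref{thm: asymptotic}, can distinguish the finitely many affine parameters on the current layer.

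First I would enlarge $\Omega$ to a bounded Lipschitz domain $\widetilde\Omega_0\supset\overline\Omega$ by attaching a piece through the flat portion $\Sigma$ and extending each tensor $\CC^{(j)}$ across $\Sigma$ in a controlled way. Using the Hofmann--Kim theory recalled in Section \ref{sec: preliminary}, whose applicability is secured by the Li--Nirenberg H\"older estimate, I obtain Green's matrices $G^{(j)}(\cdot,y)$ in $\widetilde\Omega_0$ for $\mbox{div}(\CC^{(j)}\nabla\,\cdot\,)$. Since the poles $y\in \widetilde\Omega_0\setminus\overline\Omega$ lie outside $\overline\Omega$, the traces of $G^{(j)}(\cdot,y)$ on $\partial\Omega$ are supported in $\Sigma$, and substituting $u_1=G^{(1)}(\cdot,y)$, $u_2=G^{(2)}(\cdot,z)$ into \eqref{eqn: alessandrini} yields
\[
\left|\int_\Omega(\sigma^{(1)}-\sigma^{(2)})\nabla G^{(1)}(\cdot,y)\cdot\nabla G^{(2)}(\cdot,z)\,\diff x\right|\leq C\,\|\Lambda_1^\Sigma-\Lambda_2^\Sigma\|_*\,\|G^{(1)}(\cdot,y)\|_{\Htrace}\,\|G^{(2)}(\cdot,z)\|_{\Htrace}.
\]
All subsequent work extracts the piecewise-affine differences from this inequality.

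For the base case $D_1$, I fix $p_1\in\Sigma_1$ and let two poles $y_h, z_h\in \widetilde\Omega_0\setminus\overline\Omega$ approach $p_1$ symmetrically from inside $D_1$ at distance $O(h)$. By Theorem \ref{thm: asymptotic} the integrand on $D_1$ behaves to leading order as an explicit multiple of $|x-y_h|^{-(n-1)}|x-z_h|^{-(n-1)}$, and a matched expansion in $h$ produces a linear system whose coefficients are the real parameters $s_1^{r,(1)}-s_1^{r,(2)}$, $S_1^{r,(1)}-S_1^{r,(2)}$, $s_1^{i,(1)}-s_1^{i,(2)}$, $S_1^{i,(1)}-S_1^{i,(2)}$. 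Varying $p_1$ along $\Sigma_1$ and varying the direction of approach provides enough independent equations to invert this finite-dimensional system and obtain $\|\sigma^{(1)}-\sigma^{(2)}\|_{L^\infty(D_1)}\leq C\,\|\Lambda_1^\Sigma-\Lambda_2^\Sigma\|_*$. The inductive step to $D_m$ then proceeds by sliding the poles of $G^{(j)}$ through the already-controlled outer layers via a chain-of-balls construction anchored at the flat portions $\Sigma_k$, splitting
\[
\int_\Omega(\sigma^{(1)}-\sigma^{(2)})\nabla G^{(1)}\cdot\nabla G^{(2)}\,\diff x=\sum_{k<m}\int_{D_k}+\int_{D_m}+\sum_{k>m}\int_{D_k},
\]
controlling the first sum by the inductive hypothesis, controlling the tail sum by the quantitative propagation of smallness of Proposition \ref{prop: uc} (which transfers decay of $\nabla G^{(j)}$ across the interfaces $\partial\Omega_k$), and extracting the $D_m$-contribution by the same asymptotic analysis as in the base case.

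\emph{Main obstacle.} The two delicate aspects are (i) the fact that each $\gamma_m^r,\gamma_m^i$ is affine rather than constant: the asymptotic expansion of the singular integrals must be pushed to enough orders, and tested against enough independent pole configurations, to simultaneously identify both the scalar $s_m^{\bullet}$ and the vector $S_m^{\bullet}$ of real and imaginary parts on each layer; and (ii) the propagation of singular solutions across $N$ Lipschitz interfaces while the Cauchy data is available only on $\Sigma$, carried out for the strongly elliptic $2\times 2$ system \eqref{eqn: conductivity system} rather than a scalar equation, which is where Proposition \ref{prop: uc} combined with the Hofmann--Kim regularity theory is crucial, and which is the source of the (unavoidable) exponential-in-$N$ dependence in the final Lipschitz constant $C$.
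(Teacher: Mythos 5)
Your plan follows the same route as the paper's proof: Green's matrices on an augmented domain via the Hofmann--Kim theory secured by the Li--Nirenberg bound, Alessandrini's identity tested with Green's functions whose poles lie in $\widetilde\Omega_0\setminus\overline\Omega$, asymptotic comparison of $G$ with the explicit two-phase fundamental solution $H$ near each flat interface (Theorem \ref{thm: asymptotic}), and an induction along the chain of layers driven by quantitative unique continuation. Two points in your sketch, however, do not match how the argument actually closes, and the first is a genuine gap. No single layer step produces a Lipschitz bound directly: after writing $S_{M-1}(w_M,w_M)=I_1+I_2$ with $I_1$ the contribution of $B_{\rho}(P_M)\cap D_M$, one bounds $I_2$ by a Caccioppoli inequality ($\leq C E\rho^{2-n}$), bounds $I_1$ from below via the asymptotics up to a remainder $C E r^{2-n+\theta_2}$, and bounds $S_{M-1}(w_M,w_M)$ from above by Proposition \ref{prop: uc}; balancing the free parameter $r$ against these competing error terms yields only a logarithmic modulus $\omega(t)=|\ln t|^{-b}$ at each step, i.e. $\delta_M\leq C(\ep+\delta_{M-1}+E)\,\omega\big(\tfrac{\ep+\delta_{M-1}}{\ep+\delta_{M-1}+E}\big)$. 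The Lipschitz estimate emerges only at the very end, by composing these moduli into $\omega^{(M-1)}$ and inverting it using the a-priori bound on $E$. Your claim that the base case already gives $\|\sigma^{(1)}-\sigma^{(2)}\|_{L^{\infty}(D_1)}\leq C\ep$ by inverting a finite linear system of matched asymptotics would not survive the error terms above, so the inductive hypothesis as you state it is not available.

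Second, a misattribution of roles: Proposition \ref{prop: uc} is not what controls the deeper layers $k>m$ — that tail sits inside $I_2$ and is handled by Caccioppoli together with the trivial bound $|\sigma^{(1)}-\sigma^{(2)}|\leq E$. Proposition \ref{prop: uc} is used to propagate the smallness of the singular solution $S_{M-1}$ from $(D_0)_{r_0\slash 2}$, where Alessandrini's identity bounds it by $C(\ep+\delta_{M-1})r_0^{2-n}$, down to the pole $w_M$ near $\Sigma_M$; this is your ``sliding the poles'' step, and it is precisely where the exponents $\tau_r^2\beta^{2N_1}$ enter. Two further, more cosmetic, differences: the paper reduces at the outset to the single layer $D_M$ where the supremum is attained rather than estimating every layer, and it recovers the normal component of the affine difference from the mixed second derivatives $\p_{y_n}\p_{z_n}S_{M-1}$ (whence the third estimate in Theorem \ref{thm: asymptotic}), rather than from varying the direction of approach of the poles.
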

The proof of Theorem \ref{thm: stability estimate} is postponed to Section \ref{sec4}. In the following section we introduce some auxiliary results, concerning the existence of the Green function in the complex setting and its properties, and the characterization of the singular solutions.
	
\section{Preliminary results}\label{sec: preliminary}

\subsection{Green's functions and asymptotic estimates}\label{subsec: construction Green}

Let $\uu\in H^1(\Omega)^2=H^1(\Omega)\times H^1(\Omega)$ be a vector function as in \eqref{eqn: uvector}, and $\CC$ be the tensor as in \eqref{eqn: tensorC}. Let $L$ be a second-order elliptic operator in divergence form acting on vector valued functions in $H^1(\Omega)^2$ as follows:
\begin{equation*}
L_j \uu = \sum_{l=1}^2 L_{jl}\,u^l := \sum_{l=1}^2 \sum_{h,k=1}^n \p_{x_h}(C^{hk}_{jl} \p_{x_k} u^l)\qquad \text{for }j=1,2.
\end{equation*}
We assume that $L$ satisfies the uniform ellipticity condition with constant $\lambda>1$, namely 
\begin{equation*}
\sum_{j,l=1}^2 \sum_{h,k=1}^n C^{hk}_{jl}(x) \xi_k^l\cdot \xi_h^j \geq \lambda^{-1}\sum_{j=1}^2 \sum_{h=1}^n |\xi^j_h|^2 = \lambda^{-1} |\xi|^2 \quad \text{for }\xi=(\xi^1,\xi^2)^T\in \R^{2n},
\end{equation*}
and the uniform boundedness condition, namely
\begin{equation*}
\sum_{j,l=1}^2 \sum_{h,k=1}^n |C^{hk}_{jl}(x)|^2 \leq \lambda^2\quad \text{for any }x\in \R^n.
\end{equation*}
The adjoint operator $L^*$ of $L$ is defined as
\[
L^*_j \uu = \sum_{l=1}^2 L_{lj}\,u^l := \sum_{l=1}^2 \sum_{h,k=1}^n \p_{x_h}(C^{kh}_{lj} \p_{x_k} u^l)\qquad \text{for }j=1,2.
\]
We recall the definition of Green matrix as stated in \cite{KanKim10}.
\begin{definition}\label{def: Green matrix}
Let $\Omega\subset \R^n$ be a bounded domain. We say that the $2\times 2$ matrix valued function $\GG(x,y)$ with entries $G_{jl}(x,y)$ defined on the set $\{(x,y)\in\Omega\times\Omega: \, x\neq y\}$ is the \emph{Green matrix} of $L$ in $\Omega$ if it satisfies the following properties:
\begin{itemize}
\item[(1)] $\GG(\cdot,y)\in W^{1,1}_{loc}(\Omega)$ and $L\GG(\cdot,y)=-\delta_y I_n$ for all $y\in \Omega$ in the sense that for any $\mathbf{\phi}=(\phi^1,\phi^2)^T\in C^{\infty}_c(\Omega)^2$,
\[
\sum_{j,l=1}^2 \sum_{h,k=1}^n \int_{\Omega} C_{jl}^{hk}(x) \p_{x_k} G_{lm}(x,y)\cdot \p_{x_h} \phi^j(x) \diff x= \phi^m(y),\qquad \textnormal{for }m=1,2,
\]
where $C^{\infty}_c(\Omega)$ denotes the space of functions differentiable infinitely many times with compact support in $\Omega$.
\item[(2)] For all $y\in \Omega$, $r>0$, $\GG(\cdot,y)\in H^1(\Omega\setminus \Omega_r(y))^{2\times 2}$, where $H^1(\Omega\setminus \Omega_r(y))^{2\times 2}$ is the space of matrix functions with coefficients in $H^1(\Omega\setminus \Omega_r(y))$ using the convention \eqref{eqn: conv1}. Moreover, $\GG(\cdot,y)$ vanishes on $\p\Omega$.
\item[(3)] Given $\mathbf{f}=(f^1, f^2)^T\in L^{\infty}(\Omega)^2$, the function $\uu=(u^1, u^2)^T$ defined as
\[
u^j(y)=\int_{\Omega} [G_{j1}(x,y) f^1(x) + G_{j2}(x,y) f^2(x)] \diff x, \qquad \textnormal{for }j=1,2
\]
belongs to $H^1_{0}(\Omega)^2$ and satisfies $L^*\uu = -\mathbf{f}$ in the sense that for every $\phi=(\phi^1, \phi^2)^T\in C^{\infty}_c(\Omega)^2$,
\[
\sum_{j,l=1}^2 \sum_{h,k=1}^n \int_{\Omega} C_{jl}^{hk} \p_{x_h} u^j \cdot \p_{x_k} \phi^l = \sum_{l=1}^2 \int_{\Omega} f^l \phi^l.
\]
\end{itemize}
\end{definition}

Let us recall that in our setting, the matrix $\CC$ and the Green matrix take the form
\begin{equation}\label{eqn: greenmatrix}
\CC = 
\begin{pmatrix}
\sigma_R &-\sigma_I\\
\sigma_I &\sigma_R
\end{pmatrix}
\qquad \mbox{and} \qquad
\GG(x,y) = 
\begin{pmatrix}
G_{11}(x,y) &G_{12}(x,y)\\
G_{21}(x,y) &G_{22}(x,y)
\end{pmatrix}
,
\end{equation}
respectively. As firstly proposed in \cite[Section 4.2]{AleVes05}, we find it convenient to introduce the Green matrix, and later the Green function, not for the conductor $\Omega$ but for an enlarged domain given by adding to $\Omega$ an outer layer touching the boundary $\p\Omega$. Let $D_0$ be the outer layer contained in $\R^n\setminus \overline{\Omega}$ with $C^{1,\alpha}$ boundary with constants $M_0, r_0$ as in Definition \ref{def: Holderboundary} such that $\Sigma \subset \p\Omega\cap \bar{D}_0$. \\
The augmented domain, denoted by $\tilde \Omega_0$, is defined as
\begin{equation}\label{eqn: augmented}
\tilde \Omega_0 \ = \overset{\circ}{(\overline{\Omega\cup D_0})}.
\end{equation}
We extend the complex conductivity $\sigma$ to $D_0$ so that $A|_{D_0}=I_n$ and $\gamma|_{D_0}=1$. For simplicity, we will continue to refer to them by the same letter. We introduce a cutoff domain $(D_0)_{r_0\slash 2}$ as a subset of $D_0$ defined as
\begin{equation}\label{K00}
(D_0)_{r_0\slash 2} =\Big\{x\in D_0\,:\,\textnormal{dist}(x, \p\Omega) \geq \frac{r_0}{2}\Big\}.
\end{equation}
We denote with $\Gamma$ the fundamental solution of the Laplacian operator on $\R^n$ for $n\geq 3$ given by
\[
\Gamma(x,y) = \frac{1}{(n-2)\omega_n} |x-y|^{2-n}
\]
where $\omega_n$ is the volume of the unit ball in $\R^n$. \\
Fix $m\in\{1,\dots,N+1\}$, let $P_{m+1}\in \Sigma_{m+1}$. Without loss of generality, we consider a coordinate system where $P_{m+1}$ coincides with the origin. Set $\gamma=\gamma_m(0)$, $\tilde\gamma = \gamma_{m+1}(0)$ and $A= A(0)$. Notice that $\tilde{\gamma}, \gamma \in \C$ and $A\in Sym_n$ is a positive definite matrix.\\
We define a linear map $\tilde L:\R^n\rightarrow \R^n$ given by
\begin{align*}
\tilde L(\xi) := R\sqrt{A^{-1}}\xi,
\end{align*}
where $R$ is the planar rotation in $\R^n$ that rotates the unit vector $\frac{v}{\|v\|}$, with $v=\sqrt{A}e_n$, to the $n$th standard unit vector $e_n$. Additionally, we require that $R|_{(\pi)^{\perp}} = I_n|_{(\pi)^{\perp}},$ where $\pi$ is the plane generated by $\{e_n,v\}$ and $(\pi)^{\perp}$ is its orthogonal complement of $\pi$ in $\R^n$. Hence, the fundamental solution in $\R^n$ for the differential operator 
\[
\textnormal{div}( (\gamma + (\tilde{\gamma} - \gamma) \chi_{\R^n_+})A \nabla \cdot)
\]
has the explicit formulation given by
\begin{equation}\label{eqn: fundsol}
\displaystyle
H (x,y) = |J|
\begin{cases}
\displaystyle \frac{1}{\tilde{\gamma}}\Gamma(\tilde Lx,\tilde Ly) + \frac{\tilde{\gamma}-\gamma}{\tilde{\gamma}(\tilde{\gamma}+\gamma)}\Gamma(\tilde Lx,\tilde L^*y) &\textnormal{if }x_n, y_n >0,\\
\displaystyle \frac{2}{\gamma+\tilde{\gamma}} \Gamma(\tilde Lx,\tilde Ly) &\textnormal{if } x_n\cdot y_n<0,\\
\displaystyle \frac{1}{\gamma}\Gamma(\tilde Lx,\tilde Ly) + \frac{\gamma-\tilde{\gamma}}{\gamma(\tilde{\gamma}+\gamma)}\Gamma(\tilde Lx,\tilde L^*y) &\textnormal{if }x_n, y_n <0,
\end{cases}
\end{equation}
where $y^* = (y_1,\dots,y_{n-1},-y_n)$, $J=\sqrt{A^{-1}}$, $|J|=\textnormal{det}(\sqrt{A^{-1}})$, and the matrix $\tilde L^*=\{\tilde L^*_{i,j}\}_{i,j=1}^n$ is such that $l^*_{i,j} = l_{i,j}$ for $i=1,\dots,n-1$, $j=1,\dots,n$ and $l^*_{n,j}=-l_{n,j}$ for $j=1,\dots,n$. For more details, see \cite{FosGabSin21}.\\

In \cite[Theorem 3.6]{KanKim10}, Kang and Kim provide the existence of the Green matrix under the assumption that a H\"older continuity condition holds for the solution to \eqref{eqn: conductivity system}. In our case, the regularity is guaranteed by a result due to Li and Nirenberg \cite[Theorem 1.1]{LiNir03}. We formulate it in a general form, based on the fact that it also holds when the domain of interest intersects the discontinuity interface.

The existence of the Green matrix is given in the following theorem, which is based on the results contained in \cite{HofKim07, KanKim10}, combined with the above regularity results.
 
\begin{theorem}\label{thm: Gbounds}
Under the above assumptions, there exists a unique Green matrix $\GG(x,y)= \{G_{jl}(x,y)\}_{j,l=1}^2$ which is continuous in $\{(x,y)\in\tilde\Omega_0\times\tilde\Omega_0,\, x\neq y\}$ (see Definition \ref{def: Green matrix}). Furthermore, $\GG(x,y)$ satisfies the following estimates:
\begin{equation}\label{eqn: G1}
\|\GG(\cdot,y)\|_{H^1(\tilde\Omega_0\setminus B_r(y))} \leq C r^{1-\frac{n}{2}}, \qquad \forall \,0<r<\frac{d(y)}{2}, \quad d(y)=\text{dist}(y,\p\tilde\Omega_0),
\end{equation}
\begin{equation}\label{eqn: G2}
\|\GG(x,\cdot)\|_{H^1(\tilde\Omega_0\setminus B_r(x))} \leq C r^{1-\frac{n}{2}}, \qquad \forall\,0< r<\frac{d(x)}{2}, \quad d(x)=\text{dist}(x,\p\tilde\Omega_0),
\end{equation}
\begin{equation}\label{eqn: G3}
|\GG(x,y)|\leq C |x-y|^{2-n},\quad \text{for all }x,y\in (\tilde\Omega_0)_{r_0}, \,x\neq y,
\end{equation}
\begin{equation}\label{eqn: G4}
|\nabla \GG(x,y)|\leq C |x-y|^{1-n},\quad \text{for all }x,y\in (\tilde\Omega_0)_{r_0}, \,x\neq y,
\end{equation}
using the convention \eqref{eqn: conv2}.
\end{theorem}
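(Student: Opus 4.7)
The plan is to combine the Kang--Kim construction of Green matrices for second-order elliptic systems with the piecewise regularity theorem of Li--Nirenberg \cite{LiNir03}, so as to handle the \emph{discontinuities} of $\CC$ across the interfaces $\partial\Omega_m$. I would first recast \eqref{eqn: conductivity equation} as the $2\times 2$ real system $L\uu=0$ with tensor $\CC$ as in \eqref{eqn: tensorC}, which is uniformly elliptic and bounded with constant $\lambda$. The extension of $\sigma$ to the outer layer $D_0$ by $\sigma|_{D_0}=I_n$ preserves these bounds and keeps the partition of $\tilde\Omega_0$ of class $C^{1,\alpha}$ with $\gamma$ still affine (in fact constant) on the extra layer, so that the a-priori data transfers intact from $\Omega$ to $\tilde\Omega_0$.

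Under this setup, the Kang--Kim existence theorem (a system-valued version of the Hofmann--Kim theory \cite{HofKim07}) produces a unique Green matrix $\GG(x,y)$ satisfying (1)--(3) of Definition \ref{def: Green matrix} and continuous off the diagonal, provided that weak solutions of $L\uu=0$ on balls inside $\tilde\Omega_0$ satisfy a quantitative interior H\"older bound with constants depending only on the a-priori data. This hypothesis is precisely what Li--Nirenberg \cite[Thm.~1.1]{LiNir03} delivers in our setting: on each $D_m$ one has $\sigma|_{D_m}=\gamma_m A|_{D_m}$ with $\gamma_m$ affine and $A$ Lipschitz, the interfaces $\partial\Omega_m$ are $C^{1,\alpha}$, and the ellipticity is uniform, so a weak solution is piecewise $C^{1,\alpha'}$ up to each interface, with a modulus controlled by the a-priori data. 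This yields the hypothesis needed for Kang--Kim, hence existence, uniqueness and continuity of $\GG$ on $\{(x,y)\in\tilde\Omega_0\times\tilde\Omega_0:x\neq y\}$.

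The $H^1$ bounds \eqref{eqn: G1}--\eqref{eqn: G2} are essentially built into the construction: one tests the defining identity $L\GG(\cdot,y)=-\delta_y I_n$ against a suitable truncation of $\GG(\cdot,y)$ vanishing on $B_r(y)$ and combines Caccioppoli with the weak-$L^{n/(n-2)}$ smallness of $\GG(\cdot,y)$ near the pole, giving the $r^{1-n/2}$ scaling as in \cite{HofKim07, KanKim10}. The pointwise bound \eqref{eqn: G3} on $(\tilde\Omega_0)_{r_0}$ then follows from \eqref{eqn: G1}--\eqref{eqn: G2} together with the $L^\infty$ control available on each layer, by rescaling onto a ball of radius $|x-y|/2$. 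The main obstacle is the gradient estimate \eqref{eqn: G4}, because classical $C^1$ bounds for weak solutions require $C^{0,1}$ coefficients whereas $\CC$ jumps across $\partial\Omega_m$. This is precisely where Li--Nirenberg pays off a second time: its one-sided gradient bound up to each interface controls $|\nabla \GG(x,y)|$ on balls $B_{|x-y|/4}(x)$ that may straddle an interface, and rescaling to unit size while inserting the pointwise control from \eqref{eqn: G3} yields the claimed $|x-y|^{1-n}$ behaviour, with constants depending only on the a-priori data through the Li--Nirenberg constants.
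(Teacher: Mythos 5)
Your proposal is correct and follows essentially the same route as the paper: existence, uniqueness and the bounds \eqref{eqn: G1}--\eqref{eqn: G3} are obtained from the Hofmann--Kim/Kang--Kim theory once the interior local H\"older (in fact Lipschitz) hypothesis for weak solutions of $\mbox{div}(\CC\nabla\uu)=0$ is verified via Li--Nirenberg, and \eqref{eqn: G4} is then derived by applying the rescaled Li--Nirenberg gradient bound on $B_{|x-y|/4}(x)$ together with \eqref{eqn: G3}. No substantive differences from the paper's argument.
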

\begin{proof}
The proof follows the argument presented in \cite[Proposition 5.1]{AleDiCMorRos14} in the context of elasticity.\\
In view of the results proved in \cite[Lemma 2.3]{HofKim07}, to provide the existence of the Green matrix $G$ and the properties \eqref{eqn: G1}-\eqref{eqn: G4}, it suffices to prove that there exist constants $\mu_0, C_0>0$ such that for every $R>0$ and $x\in\Omega$ with $B_{2R}(x)\subset \Omega$, all weak solutions $\uu\in H^1(B_{2R}(x))$ of the equation 
\[
\text{div}(\CC\nabla \uu)=0
\]
satisfy
\begin{equation}\label{eqn: P1}
|\uu|_{\mu_0,B_R(x)} \leq \frac{C_0}{R^{\mu_0}}\left(\frac{1}{|B_{2R}(x)|}\int_{B_{2R}(x)} |\uu|^2 \right)^{\frac{1}{2}}.
\end{equation}
Indeed, we derive \eqref{eqn: P1} with $\mu_0=1$. By \cite{LiNir03} (see also \cite[Lemma 5.2]{AleDiCMorRos14} in the elastic case), $\uu\in W^{1,\infty}(B_R(x))$, hence it is Lipschitz continuous, and
\begin{equation*}
|\uu|_{1,B_R(x)} = \|\nabla \uu\|_{L^{\infty}(B_R(x))} \leq \frac{C_0}{R^{1+\frac{n}{2}}}\left(\int_{B_{2R}(x)} |\uu|^2 \right)^{\frac{1}{2}},
\end{equation*}
from which it follows
\begin{equation}\label{eqn: P2}
\|\nabla \uu\|_{L^{\infty}(B_R(x))} \leq \frac{\tilde C_0}{R} \left(\frac{1}{|B_{2R}(x)|}\int_{B_{2R}(x)} |\uu|^2 \right)^{\frac{1}{2}}.
\end{equation}
To prove \eqref{eqn: G4}, we apply the estimate \eqref{eqn: P2} to $\GG(\cdot,y)$ in $B_s(x)$ for $s=\frac{|x-y|}{4}$, we obtain
\[
\|\nabla_x \GG(\cdot,y)\|_{L^{\infty}(B_s(x))} \leq \frac{C}{s^{1+\frac{n}{2}}} \left(\int_{B_{2s}(x)} |\GG(\xi,y)|^2 \right)^{\frac{1}{2}},
\]
and since $|\xi-y|\geq 2s$, by applying \eqref{eqn: G3} we obtain
\[
\|\nabla_x \GG(\cdot,y)\|_{L^{\infty}(B_s(x))} \leq C |x-y|^{1-n},
\]
where $C$ is a positive constant depending on $M_0, \alpha, \bar{\gamma}, \bar{A}, \lambda$.
\end{proof}
We choose to define the Green function associated to \eqref{eqn: conductivity equation} as the first column of the Green matrix $\GG$ \eqref{eqn: greenmatrix}, so that it has the form
\begin{equation}\label{eqn: complex Green}
G(x,y) := G_{11}(x,y) + i G_{21}(x,y) = G^r(x,y) + i G^i(x,y),
\end{equation}
for any $x,y \in \tilde\Omega_0$, where $G^r$ and $G^i$ denote the real and imaginary parts of the Green functions, respectively.\\
In the following theorem we give the asymptotic estimates for the Green function $G$ \eqref{eqn: complex Green} compared to $H$ \eqref{eqn: fundsol}, which will be one of the main ingredients of the proof of Theorem \ref{thm: stability estimate}.
	
\begin{theorem}\label{thm: asymptotic}
Given $m=1,\dots, N$, let $D_m$, $D_{m+1}$ be two contiguous subdomains of $\Omega$. Fix the origin at a point $P_{m+1}\in \Sigma_{m+1}$, and let $\nu$ be the outward unit normal to $\p \Omega_{m+1}$ at $P_{m+1}$. Let $Q_{m+1}$ be a point such that $Q_{m+1}\in B_{r_0/4}\cap \Sigma_{m+1}$. Then there exist positive constants $C = C(r_0, M_0, n, \bar{A}, \lambda)$, $\theta_1, \theta_2, \theta_2$, $0<\theta_1, \theta_2, \theta_3<1$ depending on the a-priori data such that the following inequalities hold true for every $\bar{x}\in B_{r_0\slash 8}\cap D_{m+1}$ and every $\bar{y}=Q_{m+1}-re_n$ for $r\in (0,r_0/8)$:
\begin{equation}\label{eqn: green1}
|G(\bar{x},\bar{y}) - H(\bar{x}, \bar{y})| \leq C |\bar{x}-\bar{y}|^{2-n+\theta_1},
\end{equation}
\begin{equation}\label{eqn: gradient green}
|\nabla_x G(\bar{x},\bar{y}) - \nabla_x H(\bar{x}, \bar{y})| \leq C  |\bar{x}-\bar{y}|^{1-n+\theta_2},
\end{equation}
\begin{equation}
|\nabla_x\nabla_y G(\bar{x},\bar{y}) - \nabla_x \nabla_y H(\bar{x}, \bar{y})| \leq C  |\bar{x}-\bar{y}|^{-n+\theta_3}.
\end{equation}
\end{theorem}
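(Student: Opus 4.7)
The strategy is to view $w := G - H$ as the solution of an inhomogeneous transmission problem whose right-hand side measures the deviation of $\sigma$ from its frozen tensor at the origin. After applying the change of coordinates $\tilde L$ that flattens $\Sigma_{m+1}$ and reduces the principal part of the frozen operator to the two-phase Laplacian for which $H$ is the explicit fundamental solution, I set
\begin{equation*}
\sigma_0(x) := \bigl(\gamma_m(0)\chi_{\{x_n<0\}} + \gamma_{m+1}(0)\chi_{\{x_n>0\}}\bigr)A(0).
\end{equation*}
Since $A\in C^{0,1}$ and each $\gamma_m$ is affine on $D_m$, one has the smallness $|\sigma(x)-\sigma_0(x)|\leq C|x|$ on each side of $\Sigma_{m+1}$ in a fixed neighbourhood of the origin, which is the crucial ingredient for producing the Hölder gain.

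Regarding $G$ and $H$ as columns of their respective Green matrices as in \eqref{eqn: greenmatrix}, for $\bar y$ fixed the difference $w(\cdot,\bar y)$ satisfies, weakly,
\begin{equation*}
\text{div}\bigl(\sigma(x)\nabla_x w(x,\bar y)\bigr) = \text{div}\bigl([\sigma(x)-\sigma_0(x)]\nabla_x H(x,\bar y)\bigr) \quad \text{in } \tilde\Omega_0,
\end{equation*}
with boundary values controlled by the traces of $G$ and $H$ on $\partial\tilde\Omega_0$. Testing against a column of the Green matrix $\GG(\cdot,\bar x)$ and using Definition \ref{def: Green matrix} yields the schematic representation
\begin{equation*}
w(\bar x,\bar y) = -\int_{\tilde\Omega_0} \nabla_z \GG(z,\bar x)\cdot[\sigma(z)-\sigma_0(z)]\nabla_z H(z,\bar y)\,\diff z + \mathcal{B},
\end{equation*}
where $\mathcal{B}$ collects a boundary term that will turn out to be of lower order.

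To prove \eqref{eqn: green1} I would split the integration into balls of radius $\rho := |\bar x-\bar y|/4$ around $\bar x$ and around $\bar y$ together with their complement, and apply the pointwise bounds $|\nabla_z \GG(z,\bar x)| \leq C|z-\bar x|^{1-n}$ and $|\nabla_z H(z,\bar y)|\leq C|z-\bar y|^{1-n}$ coming from Theorem \ref{thm: Gbounds} and the explicit formula \eqref{eqn: fundsol}. The Lipschitz smallness $|\sigma(z)-\sigma_0(z)|\leq C(|\bar y|+|z-\bar y|)$ produces, after balancing the singularity orders against the Lipschitz gain, the extra factor $|\bar x-\bar y|^{\theta_1}$ for some $\theta_1\in (0,1)$ depending on the a-priori data. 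For \eqref{eqn: gradient green} and the mixed-derivative bound, I would apply interior and interface Lipschitz-type estimates to $w$ on a ball of radius $\rho$ centered at $\bar x$, so that each derivative costs a factor $\rho^{-1}\sim |\bar x-\bar y|^{-1}$; a symmetric argument with the roles of $\bar x$ and $\bar y$ interchanged, applied to the adjoint Green matrix of $L$, delivers the $\nabla_y$ derivative.

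The main obstacle is crossing the interface $\Sigma_{m+1}$: standard interior Schauder theory breaks down at the jump of $\gamma$, so the Lipschitz-type control of $\nabla w$ near $\Sigma_{m+1}$ must come from the piecewise $C^{1,\alpha}$ a-priori estimate of Li–Nirenberg \cite{LiNir03}, the very tool already invoked in the proof of Theorem \ref{thm: Gbounds}. A secondary technical point is to verify that the boundary term $\mathcal{B}$ is of lower order than $|\bar x-\bar y|^{2-n+\theta_1}$; this uses that $\bar x, \bar y$ are at a positive distance from $\partial \tilde\Omega_0$, so both $G$ and $H$ decay away from their poles in a controlled way by \eqref{eqn: G3}.
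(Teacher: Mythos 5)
Your proposal is correct and follows essentially the same route as the paper: the paper also writes $R=G-H$ as a solution of $\mathrm{div}(\sigma\nabla R)=-\mathrm{div}((\sigma-\sigma_0)\nabla H)$ with the frozen two-phase coefficient $\sigma_0$, uses the representation formula with a bounded (hence lower-order) boundary term, the Lipschitz smallness $|\sigma-\sigma_0|\leq C|z|$, and the pointwise bounds of Theorem \ref{thm: Gbounds} to get \eqref{eqn: green1}. The only cosmetic difference is in the derivative estimates, where the paper upgrades the $L^{\infty}$ bound on $R$ via an interpolation inequality between $\|R\|_{L^{\infty}}$ and the H\"older seminorm $|\nabla R|_{\alpha'}$ (the latter controlled by Li--Nirenberg applied to $G$ and $H$ separately on a small cylinder), rather than by a Lipschitz-type estimate for the inhomogeneous equation; both hinge on the same \cite{LiNir03} regularity.
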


\subsection{Quantitative estimates of unique continuation}

In this subsection we introduce the notation and the parameters used for the quantitative unique continuation estimates. We define the singular solutions $S_k$, their derivatives, and we derive an upper bound for them. In Proposition \ref{prop: uc} we state the main result.\\
As a consequence of the a-priori assumptions on the scalar part of the complex conductivity, we have that there is an index $M\in \{1,\dots,N+1\}$ such that $\|\gamma^{(1)} - \gamma^{(2)}\|_{L^{\infty}(\Omega)} = \|\gamma^{(1)} - \gamma^{(2)}\|_{L^{\infty}(D_M)}$. We set
\[
E:= \|\gamma^{(1)} - \gamma^{(2)}\|_{L^{\infty}(D_M)}.\]
Let $D_0, D_1,\dots,D_M$ be a chain of contiguous domains. Set $\mathcal{U}_0 = \Omega$, and set
\begin{equation}\label{eqn: WkUk}
\mathcal{W}_k = \bigcup_{m=0}^k D_m, \qquad \mathcal{U}_k = \tilde\Omega_0\setminus \mathcal{W}_k, \qquad   \textnormal{for }k = 0,\dots, M,
\end{equation}
where $\tilde\Omega_0$ is the augmented domain defined in \eqref{eqn: augmented}.\\
Recalling our convention \eqref{eqn: complex Green}, the Green functions $G_j$ for $j=1,2$ correspond to the first column of the Green matrix \eqref{eqn: greenmatrix}.
For $y, z\in \mathcal{W}_k$, we define
\begin{equation*}
S_k(y,z) = \int_{\mathcal{U}_k} (\sigma^{(1)}(x) - \sigma^{(2)}(x))\, \nabla_x G_1(x,y)\cdot \nabla_x G_2(x,z)\,\diff x,
\end{equation*}
and for $h, l=1,\dots,n$,
\begin{equation*}
\partial_{y_h}\partial_{z_l} S_k (y,z) = \int_{\mathcal{U}_k} (\sigma^{(1)}(x) - \sigma^{(2)}(x))\, \partial_{y_h} \nabla_x G_1(x,y)\cdot \partial_{z_l} \nabla_x G_2(x,z)\,\diff x.
\end{equation*}
Thanks to Theorem \ref{thm: Gbounds}, for $y,z \in \mathcal{W}_k$ such that $\textnormal{dist}(y,\mathcal{U}_k), \textnormal{dist}(z,\mathcal{U}_k)\geq r_0/2$, we have
\begin{equation}\label{eqn: Skupper}
|S_k(y,z)| \leq C \, E \, (d(y)\,d(z))^{1-n\slash 2}, \quad \text{for }k=1,\dots,M,
\end{equation}
where $d(y)=\textnormal{dist}(y,\mathcal{U}_k), d(z)=\textnormal{dist}(z,\mathcal{U}_k)$
(see \cite[Equation (3.15)]{AleVes05}).

For $k\in \{1,\dots,M\}$, for any $y,z\in\mathcal{W}_k$, we have that $S_k(\cdot,z), S_k(y,\cdot) \in H^1_{loc}(\mathcal{W}_k)$ and they are weak solutions to the conductivity equations
\begin{equation}\label{eqn: singluar conduct}
\textnormal{div}_y(\sigma^{(2)}\nabla_y S_k(\cdot,z)) = 0,\quad \mbox{and}\quad \textnormal{div}_z(\sigma^{(1)}\nabla_z S_k(y,\cdot)) = 0, \qquad \textnormal{in } \mathcal{W}_k.
\end{equation}

In the following Proposition we introduce the quantitative estimate of unique continuation for the singular solution $S_k$ and its derivatives, whose proof is postponed to the Appendix \ref{appendix-uc}.

\begin{proposition}\label{prop: uc}
If, for some positive constant $\ep_0$,
\begin{equation*}
|S_{k}(y,z)|\leq \ep_0,\qquad \mbox{for every }\, (y,z)\in (D_0)_{r_0\slash 2}\times (D_0)_{r_0\slash 2},
\end{equation*}
with $(D_0)_{r_0\slash 2}$ as in \eqref{K00}, then there exist $\bar{r}>0$, $C_1, C_2, C_3>0$ constants depending only on the a-priori data such thta the following inequalities hold true for every $r\in(0,\bar{r}/8)$: 
\begin{align}
|S_{k}(y_{k+1},y_{k+1})| &\leq C_1 r^{-2\tilde\gamma} (\ep_0 +E) \Big(\frac{\ep_0}{\ep_0 + E}\Big)^{\tau_r^{2}\beta^{2N_1}},\label{eqn: singular 21}\\
|\partial_{y_h}\partial_{z_l}S_{k}(y_{k+1},y_{k+1})| &\leq C_2 r^{-2\tilde\gamma-2} (\ep_0 +E) \Big(\frac{\ep_0}{\ep_0 + E}\Big)^{\tau_r^{2}\beta^{2N_1}},\label{eqn: singular 22}
\end{align}
for $h,l = 1, \dots, n$, where:
\begin{itemize}
    \item $\tilde \gamma = \frac{n}{2}-1$;
    \item $y_{k+1}=P_{k+1}-r\nu(P_{k+1})$;
    \item $P_{k+1}$ is a point on $\Sigma_{k+1}$;
    \item $\nu(P_{k+1})$ is the outward unit normal of $\p \Omega_{k+1}$ at the point $P_{k+1}$.
\end{itemize}
\end{proposition}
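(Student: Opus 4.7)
The strategy I would follow is the standard two-variable iterated three-spheres propagation of smallness used in \cite{AleVes05, AleDeHGabSin18, FosGabSin21}, adapted to the complex-valued, layered setting of this paper. By \eqref{eqn: singluar conduct}, the partial functions $y\mapsto S_k(y,z)$ and $z\mapsto S_k(y,z)$ are weak $H^1_{loc}$ solutions of uniformly elliptic equations in divergence form on $\mathcal{W}_k$; smallness of order $\ep_0$ is known on the ``source'' region $(D_0)_{r_0\slash 2}\times(D_0)_{r_0\slash 2}$, and a universal a-priori upper bound of order $(d(y)d(z))^{1-n/2}(\ep_0+E)$ is provided by \eqref{eqn: Skupper}. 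These are precisely the two ingredients required by the interpolation technique.

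First I would freeze $z\in(D_0)_{r_0\slash 2}$ and propagate smallness in $y$ from $(D_0)_{r_0\slash 2}$ to the target $y_{k+1}=P_{k+1}-r\nu(P_{k+1})$. I construct a chain of overlapping balls inside $\mathcal{W}_k$, of cardinality $N_1$ depending only on the a-priori data, connecting the source region to a ball of radius comparable to $r$ centred slightly inside $D_{k+1}$. On each ball, a three-spheres inequality for weak solutions of $\mathrm{div}(\sigma^{(2)}\nabla\cdot)=0$ of the form
\[
\|S_k(\cdot,z)\|_{L^\infty(B_{r_j})}\le C\,\|S_k(\cdot,z)\|_{L^\infty(B_{R_j})}^{\beta}\,\|S_k(\cdot,z)\|_{L^\infty(B_{2R_j})}^{1-\beta},
\]
valid for some $\beta\in(0,1)$, is iterated along the chain; combining the resulting interpolation with \eqref{eqn: Skupper} for the ``large'' factor and with $\ep_0$ for the ``small'' one, and performing a final short-range three-spheres step at distance $r$ from $\Sigma_{k+1}$, produces the exponent $\tau_r\beta^{N_1}$ and the blow-up factor $r^{-\tilde\gamma}$, with $\tilde\gamma=n/2-1$ matching the Green-function singularity at $y=z$.

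Next I would repeat the identical argument in the second variable, viewing $z\mapsto S_k(y_{k+1},z)$ as an elliptic solution on $\mathcal{W}_k$ whose $L^\infty$ norm on $(D_0)_{r_0\slash 2}$ was already controlled by the first step. This doubles both the exponent, to $\tau_r^2\beta^{2N_1}$, and the prefactor, to $r^{-2\tilde\gamma}$, producing \eqref{eqn: singular 21}. For the derivative bound \eqref{eqn: singular 22}, I would note that $y_{k+1}$ lies at distance $\sim r$ from $\Sigma_{k+1}$, so the set $B_{r\slash 2}(y_{k+1})\times B_{r\slash 2}(y_{k+1})$ sits inside a single layer where $\sigma^{(j)}$ is smooth; on such a product the Li--Nirenberg gradient bound \cite{LiNir03} together with a Caccioppoli inequality yields
\[
|\partial_{y_h}\partial_{z_l}S_k(y_{k+1},y_{k+1})|\le \frac{C}{r^2}\sup_{B_{r\slash 2}(y_{k+1})\times B_{r\slash 2}(y_{k+1})}|S_k(y,z)|,
\]
and applying the $L^\infty$ bound of \eqref{eqn: singular 21} pointwise on this product produces the extra factor $r^{-2}$ without degrading the Hölder-type exponent.

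The main obstacle is that the chain of balls used in the propagation must cross the internal interfaces $\Sigma_1,\dots,\Sigma_k$ across which the coefficients $\sigma^{(j)}$ are only piecewise Lipschitz and the tensor $\CC$ is discontinuous, whereas classical three-spheres inequalities require Lipschitz regularity throughout a ball. The required ingredient is an interface three-spheres inequality, which in this setting is obtained via a Carleman estimate with a non-smooth weight, exploiting the fact that solutions are Lipschitz on each side of $\Sigma_m$ by \cite{LiNir03}, as developed for precisely this geometry in \cite{AleDeHGabSin18} and adapted in \cite{FosGabSin21}. Once this interface estimate is in hand, the remainder of the argument is a purely geometric iteration, and all the constants that appear depend only on the a-priori data.
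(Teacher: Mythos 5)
Your overall architecture coincides with the paper's: the proof there also freezes one variable, propagates smallness of $S_k(\cdot,z)$ from $(D_0)_{r_0/2}$ to a point at distance $r$ from $\Sigma_{k+1}$ along a chain of balls via iterated three-spheres inequalities (packaged as Proposition \ref{prop: uc1}), then repeats the argument in the second variable to double the exponent to $\tau_r^2\beta^{2N_1}$ and the prefactor to $r^{-2\tilde\gamma}$, using exactly the two inputs you identify, namely the smallness hypothesis on $(D_0)_{r_0/2}\times(D_0)_{r_0/2}$ (via Alessandrini's identity) and the a-priori bound \eqref{eqn: Skupper}. The derivative bound \eqref{eqn: singular 22} is likewise obtained from interior estimates at scale $r$; the paper cites Douglis--Nirenberg \cite{DouNir55} where you use Li--Nirenberg plus Caccioppoli, which is an equivalent device for this purpose.

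The one substantive issue is your source for the interface three-spheres inequality, which you correctly identify as the crux. You attribute it to the Carleman-weight constructions of \cite{AleDeHGabSin18} and \cite{FosGabSin21}, but those results concern \emph{real} conductivities. Here $\sigma$ is complex, so $\mathrm{div}(\sigma\nabla u)=0$ is equivalent to the non-self-adjoint $2\times 2$ system \eqref{eqn: conductivity system}, and quantitative unique continuation for such systems is not a routine adaptation of the scalar real case: it is the content of the recent works \cite{CarNguWan20} and \cite{FraVesWan23}, and the paper's proof rests entirely on the three-sphere inequalities of \cite{FraVesWan23}, restated as Propositions \ref{prop: 3spherefar} and \ref{prop: 3spheregeneral}, both away from and across the discontinuity interfaces. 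Without invoking a complex-coefficient result of this type, no single link of your chain is justified. A more minor point: in the derivative step you apply \eqref{eqn: singular 21} ``pointwise on the product'' $B_{r/2}(y_{k+1})\times B_{r/2}(y_{k+1})$, but \eqref{eqn: singular 21} is stated only at the single point $(y_{k+1},y_{k+1})$; one must rerun the propagation uniformly over all centres in that ball (all at distance comparable to $r$ from $\Sigma_{k+1}$), which works with the same constants but should be stated.
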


\section{Proof of Theorem  \ref{thm: stability estimate}}\label{sec4}

\begin{proof}[Proof of Theorem \ref{thm: stability estimate}]

Let $\sigma^{(j)}$ for $j=1,2$ be two anisotropic complex conductivities satisfying the a-priori assumptions given in Section \ref{sec: apriori}, and let $\Lambda_j^{\Sigma}$ be the corresponding local DtoN maps. By \eqref{eqn: Anorm}, we derive that
\[
\|\sigma^{(1)}-\sigma^{(2)}\|_{L^{\infty}(\Omega)}\le \bar{A}\, \|\gamma^{(1)}-\gamma^{(2)}\|_{L^{\infty}(\Omega)}.
\]
Notice that there is an index $M\in \{1,\dots,N+1\}$ for which the following identity holds: 
\begin{align*}
\|\gamma^{(1)}-\gamma^{(2)}\|_{L^{\infty}(\Omega)} =\|\gamma_M^{(1)}-\gamma_M^{(2)}\|_{L^{\infty}(D_M)}.
\end{align*}
The Lipschitz stability estimate \eqref{eqn: Lipschitz estimate} follows directly from the inequality
\begin{equation}\label{eqn: gamma estimate}
\|\gamma_M^{(1)}-\gamma_M^{(2)}\|_{L^{\infty}(D_M)} \le C \|\Lambda_1^{\Sigma} - \Lambda_2^{\Sigma}\|_*,
\end{equation}
where $C>1$ is a constant depending only on the \textit{a-priori} data. We prove \eqref{eqn: gamma estimate}.\\

Notice that the norm $\|\gamma_M^{(1)}-\gamma_M^{(2)}\|_{L^{\infty}(D_M)}$ can be evaluated in terms of the quantities
\begin{align}
    &\|\gamma_M^{(1)} - \gamma_M^{(2)}\|_{L^{\infty}(\Sigma_{M}\cap B_{r_0\slash 2}(P_M))}\\
    &|\p_{\nu}(\gamma_M^{(1)} - \gamma_M^{(2)})(P_M)|,
\end{align}
where $r_0>0$ is the constant introduced in Section \ref{sec: apriori}. Indeed, introduce the following notation
\begin{equation*}
\alpha_M +\beta_M\cdot x = \big(\gamma^{(1)}_M -\gamma^{(2)}_M\big)(x),\ \ \ \textnormal{for any } x\in D_M,
\end{equation*}
with $\alpha_M \in \C, \beta_M\in \C^n$. Fix an orthonormal basis of vectors $\{e_j\}_{j=1,\dots, n-1}$ with origin at $P_M$ such that its vectors generate the hyperplane containing the flat part $\Sigma_M$. 

If we evaluate $\big(\gamma^{(1)}_M -\gamma^{(2)}_M\big)$ at the points $P_M$ and $P_M + \frac{r_0}{6} e_j$ for $j=1,\dots, n-1$ and taking their differences, it follows that
\begin{align}
    |\alpha_M + \beta_M\cdot P_M| &\leq \|\gamma_M^{(1)} - \gamma_M^{(2)}\|_{L^{\infty}(\Sigma_{M}\cap B_{r_0\slash 2}(P_M))},\\
    \sum_{j=1}^{n-1}|\beta_M \cdot e_j|
&\leq  C\|\gamma^{(1)}_M -\gamma^{(2)}_M\|_{L^{\infty}(\Sigma_M\cap B_{r_0/2}(P_M))},\\
|\beta_M\cdot\nu| &=\left|\p_{\nu}(\gamma^{(1)}_M - \gamma^{(2)}_M)(P_M)\right|.
\end{align}
In conclusion, 
\begin{equation*}
|\alpha_M| +|\beta_M|\leq C\left(\|\gamma^{(1)}_M -\gamma^{(2)}_M\|_{L^{\infty}(\Sigma_M\cap B_{r_0/2}(P_M))} + \left|\partial_{\nu}(\gamma^{(1)}_M - \gamma^{(2)}_M)(P_M)\right|\right).
\end{equation*}
\begin{nota}
Define the following quantities:
\begin{align*}
\varepsilon &= \|\Lambda_1^{\Sigma} - \Lambda_2^{\Sigma}\|_*,\\  
E &= \|\gamma^{(1)} - \gamma^{(2)}\|_{L^{\infty}(\Omega)} = \|\gamma^{(1)}_M-\gamma^{(2)}_M\|_{L^{\infty}(D_M)},\\
\delta_{M-1} &= \|\gamma^{(1)} - \gamma^{(2)}\|_{L^{\infty}(\mathcal{W}_{M-1})},
\end{align*}
where $\mathcal{W}_{M-1}$ is the set defined in \eqref{eqn: WkUk}.
\end{nota}
Let $D_1, D_2,\dots, D_M$ be the chain of subdomains satisfying the assumptions of Section \ref{sec: apriori}.

\begin{remark}
Let $\tilde\Omega_0$ be the augmented domain as defined in \eqref{eqn: augmented}. Let $\sigma^{(j)}$ for $j=1,2$ denote the extended coefficients on $\tilde\Omega_0$ with $\sigma^{(j)}|_{D_0}=I_n$ and $\gamma^{(j)}|_{D_0}=1$. The Green functions that will be considered in the proof are defined on the augmented domain.
\end{remark}
	
For $y,z\in \mathcal{W}_{M-1}$, if we choose
\[
u_1(x) = G_1(x,y) \quad \textnormal{and}\quad u_2(x)=G_2(x,z),
\]
where $G_j$ are the Green functions defined in \eqref{eqn: complex Green}, by Alessandrini's identity \eqref{eqn: alessandrini} the following identities hold:
\begin{multline}\label{eqn: Alessandrini1}
\la (\Lambda_{1}^{\Sigma} - \Lambda_{2}^{\Sigma}) G_2(x,y)|_{\p\Omega}, \bar{G}_1(x,z)|_{\p\Omega} \ra \\ = \int_{\mathcal{W}_{M-1}}  (\sigma^{(1)} - \sigma^{(2)})(x) \nabla G_1(x,y) \cdot  \nabla G_2(x,z)\diff x + S_{M-1}(y,z),
\end{multline}
and
\begin{multline*}
\la (\Lambda_{1}^{\Sigma} - \Lambda_{2}^{\Sigma}) \p_{y_n} G_2(\cdot,y)|_{\p\Omega}, \p_{z_n} \bar{G}_1(\cdot,z)|_{\p\Omega} \ra \\ = \int_{\mathcal{W}_{M-1}} (\sigma^{(1)} - \sigma^{(2)})(x) \nabla_x \p_{y_n} G_1(x,y) \cdot \nabla_x \p_{z_n} G_2(x,z) \diff x
+ \p_{y_n} \p_{z_n} S_{M-1}(y,z).
\end{multline*}
Recall that $S_{M-1}(\cdot,z)$, $S_{M-1}(y,\cdot)$, are locally weak solutions to
\begin{equation*}
\mbox{div}_y(\sigma^{(1)}\nabla_y S_{M-1}(\cdot,z)) = 0,\quad \mbox{and}\quad \mbox{div}_z(\sigma^{(2)}\nabla_z S_{M-1}(y,\cdot)) = 0, \qquad \mbox{in }\mathcal{W}_{M-1}.
\end{equation*}        
From \eqref{eqn: Alessandrini1}, for any $y,z\in (D_0)_{r_0\slash 2}$, we derive
\begin{equation}\label{eqn: S0}
|S_{M-1}(y,z)| \leq  C \:(\ep + \delta_{M-1})\,r_0^{2-n},
\end{equation}
where $C$ is a positive constant depending on the a-priori data.

Let $\rho=r_0/4$ and fix $r\in(0,r_0/2)$. Set 
\begin{equation*}
w_{M} = P_M + r \nu(P_M),
\end{equation*}
where $\nu(P_M)$ the outward unit normal of $\p \Omega_M$ at the point $P_M$.

Choose $y=z=w_M$, one can reformulate the expression of the singular solution as the sum of two integrals $I_1(w_M)$ and $I_2(w_M)$ over the domains of integration $B_{\rho}(P_M)\cap D_M$ and $\Omega\setminus (B_{\rho}(P_M)\cap D_M)$:
\begin{equation}\label{eqn: split}
S_{M-1}(w_M,w_M)=I_1(w_M)+I_2(w_M),
\end{equation}
where
\begin{align}
I_1(w_M) &= \int_{B_{\rho}(P_M)\cap D_M}(\gamma_M^{(1)}-\gamma_M^{(2)})(x)A(x)\nabla G_1(x,w_M)\cdot\nabla G_2(x,w_M)\,\diff x,\label{eqn: I11}\\
I_2(w_M) &= \int_{\mathcal{U}_{M-1}\setminus (B_{\rho}(P_M)\cap	D_M)}(\sigma^{(1)}-\sigma^{(2)})(x) \nabla G_1(x,w_M)\cdot\nabla G_2(x,w_M)\,\diff x.\label{eqn: I21}
\end{align}
The integral \eqref{eqn: I21} can be estimated by a Caccioppoli type inequality \cite[Proposition 6.1]{BerFra11} as
\begin{equation}\label{eqn: caccio}
|I_2(w_M)|\leq C\:E\:\rho^{2-n}.
\end{equation}
To estimate \eqref{eqn: I11} from below in terms of $\|\gamma^{(1)}_M -\gamma^{(2)}_M\|_{L^{\infty}(\Sigma_M\cap B_{r_0/2}(P_M))}$, notice that there exists a point $x_M\in \overline{\Sigma_M\cap B_{r_0/2}(P_M)}$ such that
\[
(\gamma^{(1)}_M-\gamma^{(2)}_M)(x_M)=\|\gamma^{(1)}_M - \gamma^{(2)}_M\|_{L^{\infty}(\Sigma_M\cap B_{r_0/2}(P_M))}.
\]
Up to a change of coordinate, we can assume that $P_M$ coincides with the origin $O$, by the triangle inequality and \eqref{eqn: Anorm} we derive
\begin{equation}\label{eqn: I1intermed}
\begin{split}
|I_1(w_M)| &\geq \left|\int_{B_{\rho}(P_M)\cap D_M}  (\gamma^{(1)}_M - \gamma^{(2)}_M)(x_M) A(x)\nabla G_1(x,w_M)\cdot \nabla G_2(x,w_M) \diff x\right|\\
&- \bar{A}\int_{B_{\rho}(P_M)\cap D_M}  |\beta_M \cdot (x-x_M)| |\nabla G_1(x,w_M)\cdot \nabla G_2(x,w_M)| \diff x.
\end{split}
\end{equation}
After adding and subtracting the fundamental solution $H_j(x,w_1)$ for $j=1,2$ in \eqref{eqn: I1intermed}, and applying the asymptotic estimate for the Green functions \eqref{eqn: gradient green}, one can bound $|I_1(w_M)|$ from below in terms of the parameter $\tau$ as follows:
\begin{equation}\label{eqn: I1lower}
|I_1(w_M)| \geq \|\gamma^{(1)}_M - \gamma^{(2)}_M\|_{L^{\infty}(\Sigma_M\cap B_{r_0/2}(P_M))}C\:r^{2-n} -C\,E\,r^{2-n+\theta_2}.
\end{equation}
By \eqref{eqn: split}, \eqref{eqn: S0}, \eqref{eqn: caccio}, and \eqref{eqn: singular 21}, it follows that
\begin{equation}\label{eqn: I1upper}
\begin{split}
|I_1(w_M)|&\leq |S_{M-1}(w_M,w_M)| + |I_2(w_M)|\\
&\leq C_1 r^{-2\tilde\gamma} (\ep + \delta_{M-1} +E) \Big(\frac{\ep + \delta_{M-1}}{\ep + \delta_{M-1} + E}\Big)^{\tau_r^{2}\beta^{2N_1}} + C\:E\:\rho^{2-n}.
\end{split}
\end{equation}
Combining \eqref{eqn: I1lower} and \eqref{eqn: I1upper} yields
\begin{multline*}
\|\gamma^{(1)}_M-\gamma^{(2)}_M\|_{L^{\infty}(\Sigma_M\cap B_{r_0/2}(P_M))} r^{2-n}\leq C_1 r^{-2\tilde\gamma} (\ep + \delta_{M-1} +E) \Big(\frac{\ep + \delta_{M-1}}{\ep + \delta_{M-1} + E}\Big)^{\tau_r^{2}\beta^{2N_1}}\\ + C\:E\:\rho^{2-n} + C\, E\,r^{2-n+\theta_2}.
\end{multline*}
Multiplying by $r^{n-2}$, one obtains
\begin{equation*}
\|\gamma^{(1)}_M - \gamma^{(2)}_M\|_{L^{\infty}(\Sigma_M\cap B_{r_0/2}(P_M))}\leq C(\ep + \delta_{M-1} + E) \bigg[\big(\frac{\ep + \delta_{M-1}}{\ep + \delta_{M-1} + E} \big)^{\tau_r^{2}\beta^{2N_1}} + r^{\theta_2} \bigg].
\end{equation*}
Following the argument of \cite{BerFra11}, by taking
\[
r=\bigg(\ln\big(\frac{\ep + \delta_{M-1}}{\ep + \delta_{M-1} + E} \big)\bigg)^{-1/4}
\]
we obtain
\begin{equation}\label{eqn: gammasurf}
    \|\gamma^{(1)}_M - \gamma^{(2)}_M\|_{L^{\infty}(\Sigma_M\cap B_{r_0/2}(P_M))}\leq C\,(\ep + \delta_{M-1} + E) \bigg(\ln\big(\frac{\ep + \delta_{M-1}}{\ep + \delta_{M-1} + E} \big)\bigg)^{-\theta_2/4}.
\end{equation}

The estimate of $|\p_{\nu}(\gamma^{(1)}_M-\gamma^{(2)}_M)(P_M)|$ can be derived in a similar fashion. Indeed, let $\rho, r, P_M, w_M$ be the quantities defined above. Consider
\[
\p_{y_n}\p_{z_n}S_{M-1}(w_M,w_M) = I_1(w_M) + I_2(w_M),
\]
where
\begin{align*}
    I_1(w_M) &= \int_{B_{\rho}(P_M)\cap D_M} (\gamma^{(1)}_M - \gamma^{(2)}_M)(x)\:A(x)\:\nabla_x \p_{y_n} G_1(x,w_M) \cdot \nabla_x \p_{z_n} G_2(x,w_M)\:\diff x,\\
    I_2(w_M) &= \int_{\mathcal{U}_{M-1} \setminus (B_{\rho}(P_M)\cap D_M)} (\sigma^{(1)} - \sigma^{(2)})(x)\:\nabla_x \p_{y_n} G_1(x,w_M)\cdot \nabla_x \p_{z_n} G_2(x,w_M)\:\diff x.
\end{align*}
Similarly as in \eqref{eqn: caccio}, one derives
\begin{equation*}
|I_2(w_M)| \leq C \,E\:\rho^{-n}.
\end{equation*}
To estimate $I_1$, due to the piecewise affine nature of the scalar part of the complex anisotropic conductivity, one applies the following identity: 
\begin{multline*}
(\gamma^{(1)}_M-\gamma^{(2)}_M)(x)=(\gamma^{(1)}_M-\gamma^{(2)}_M)(P_M) + (D_T(\gamma^{(1)}_M -\gamma^{(2)}_M)(P_M))\cdot (x-P_M)'\\ + (\partial_{\nu}(\gamma^{(1)}_M-\gamma^{(2)}_M)(P_M))(x-P_M)_n,
\end{multline*}
where $D_T$ is the tangential derivative along the directions of the orthonormal basis $\{e_j\}_{j=1,\dots,n-1}$ introduced at beginning of the proof. Hence, proceeding in a similar fashion as for \eqref{eqn: I1lower} and \eqref{eqn: I1upper}, one derives
\begin{multline*}
|\partial_{\nu}(\gamma^{(1)}_M-\gamma^{(2)}_M)(P_M)| \le  C (\ep + \delta_{M-1} + E) \bigg[\big(\frac{\ep + \delta_{M-1}}{\ep + \delta_{M-1} + E} \big)^{\tau_r^{2}\beta^{2N_1}}\\ + \bigg(\ln\big(\frac{\ep + \delta_{M-1}}{\ep + \delta_{M-1} + E} \big)\bigg)^{-\theta_2/4} + r^{\theta_3} \bigg].
\end{multline*}
Optimising the right hand side with respect to $r$ yields
\begin{equation}\label{eqn: gammanu}
|\partial_{\nu}(\gamma^{(1)}_M - \gamma^{(2)}_M) (P_M)| \le C\,(\ep + \delta_{M-1} + E) \bigg(\ln\big(\frac{\ep + \delta_{M-1}}{\ep + \delta_{M-1} + E} \big)\bigg)^{-\theta_3/(\theta_3+3)}.
\end{equation}
In conclusion, collecting \eqref{eqn: gammasurf} and \eqref{eqn: gammanu} yields
\[
E = \delta_M \leq C\,(\ep + \delta_{M-1} + E) \bigg(\ln\big(\frac{\ep + \delta_{M-1}}{\ep + \delta_{M-1} + E} \big)\bigg)^{-b},
\]
for a suitable constant $b\in (0,1)$.

Before completing the proof, we introduce the concave, non-decreasing function $\omega(t)$ on $(0,+\infty)$ as
\begin{equation}\label{eqn: omega}
\omega(t)=
\begin{cases}
    |\ln t|^{-b} &t\in (0,e^{-2}),\\
    2^{-b} &t\in[e^{-2},+\infty).
\end{cases}
\end{equation}
Recalling \cite[Equations (4.34) and (4.35)]{AleVes05}), we have that
\begin{equation}\label{property1}
(0,+\infty)\ni t \rightarrow \ t\omega\left(\frac{1}{t}\right)\quad\mbox{is a non-decreasing function}.
\end{equation}
Furthermore, we denote by $\omega^{(j)}$, $j\in \N$, the functions defined by $\omega^{(1)}=\omega,$  $\omega^{(j)}=\omega\circ \omega^{(j-1)}$, and $\omega^{(0)}(t)=t^{\theta_3/4}$ for $0<t<1$.

Applying the definition of $\omega$ \eqref{eqn: omega} yields
\begin{equation*}
E \le C\,(\ep + \delta_{M-1} + E) \omega\bigg(\frac{\ep + \delta_{M-1}}{\ep + \delta_{M-1} + E} \bigg).
\end{equation*}
By \eqref{property1}, we derive
\begin{equation}\label{eqn: semifinal}
    E \leq C (\ep + E)\,\omega^{(M-1)}\bigg(\frac{\ep}{\ep + E} \bigg).
\end{equation}
If $E>e^{2}\ep$ (the other case is trivial), \eqref{eqn: semifinal} can be written as
\[
\frac{1}{C} \leq \omega^{(M-1)}\bigg(\frac{\ep}{E} \bigg).
\]
with $C>1$, and since the function $\omega^{(M)}$ is invertible, it follows that
\[
E\leq \frac{1}{\omega^{(-(M-1)}(1/C)}\:\varepsilon,
\]
where $\omega^{(-(M-1))}$ is the inverse of $\omega^{(M-1)}$. Hence, the Lipschitz stability estimate \eqref{eqn: Lipschitz estimate} is satisfied for constants $C$ that depend on the \textit{a-priori} data only.
\end{proof}

\section{Concluding remarks}\label{section5}

\setcounter{equation}{0}

The stability estimate provided in \ref{thm: stability estimate} is a promising result for future numerical implementations. However, the numerical computation of the norm of the local Dirichlet-to-Neumann map is not always feasible. In recent years, the growing interest in computing cost functionals to determine some unknown parameters of a medium has led to the idea of expressing stability estimates in different forms. Following the approach introduced in \cite{A-dH-F-G-S} for reconstructing the wave speed of a material from Cauchy data, and in \cite{FosGabSin21} for the conductivity problem, we introduce a misfit functional and present a stability estimate formulated in terms of it as well.\\
The assumptions about the nested domains $\{\Omega_m\}_{m=0,\dots,N}$, $\sigma^{(j)}$ and its extension to $\tilde\Omega_0$, for $j=1,2$ are those of \ref{sec: apriori}.  Let $G_j$ be the complex Green's function of \eqref{def: Green matrix} associated to the elliptic operator $\mbox{div}(\sigma^{(j)}(\cdot)\nabla\cdot)$ in $\tilde\Omega_0$ for $j=1,2$\\
For $(y,z)\in D_y\times D_z\subset (D_0)_{r_0\slash 2}\times (D_0)_{r_0\slash 2}$, we define the \textit{misfit functional} as 
\begin{equation}\label{misfit2}
\mathcal{J}(\sigma^{(1)},\sigma^{(2)})=\int_{D_y\times D_z} \left|S_0(y,z)\right|^2 \diff y\, \diff z,
\end{equation}
where
\[
S_0(y,z) = \int_{\Sigma} \left[ G_2(\cdot,z)\sigma^{(1)}(\cdot)\nabla G_1(\cdot,y)\cdot\nu - G_1(\cdot,y)\sigma^{(2)}(\cdot)\nabla G_2(\cdot,z)\cdot\nu\right]\:\diff S.
\]

As in \cite[Theorem 2.1]{FosGabSin21}, one can derive a stability estimate in terms of the misfit functional \eqref{misfit2}.

\begin{theorem}\label{teorema misfit}
Let $\Omega$ be as above and $\{\Omega_m\}_{m=1,\dots,N+1}$ be a family of nested domains satisfying the assumptions of \ref{sec: apriori}. Let $\sigma^{(j)}$ for $j=1,2$ be two complex conductivities as in \eqref{eqn: complex-cond} satisfying the assumptions of \ref{sec: apriori}. Then there exists a positive constant $C$ depending only on the a-priori data such that 
\begin{equation*}
\|\sigma^{(1)}-\sigma^{(2)}\|_{L^{\infty}(\Omega)}\leq C
\left(\mathcal{J}(\sigma^{(1)},\sigma^{(2)})\right)^{1\slash 2}.
\end{equation*}
\end{theorem}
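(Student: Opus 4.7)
The plan is to run the argument of the proof of Theorem \ref{thm: stability estimate} with the misfit functional $\mathcal{J}$ taking the place of $\|\Lambda_1^\Sigma - \Lambda_2^\Sigma\|_*$. First I would observe that the boundary integral $S_0(y,z)$ in \eqref{misfit2} coincides with the singular solution $S_0$ of Section \ref{sec: preliminary}. Because the augmented domain $\tilde\Omega_0$ is constructed so that the outer layer $D_0$ meets $\partial\Omega$ precisely along $\Sigma$, for $y,z\in (D_0)_{r_0/2}$ the Green's functions $G_j(\cdot,y)$ are smooth in $\overline{\Omega}$, vanish on $\partial\tilde\Omega_0$, and have traces on $\partial\Omega$ supported in $\Sigma$. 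Two integrations by parts in $\Omega$, using $\textnormal{div}(\sigma^{(j)}\nabla G_j)=0$ and the symmetry of $\sigma^{(j)}$, give
\[
S_0(y,z) = \int_\Omega (\sigma^{(1)} - \sigma^{(2)})\,\nabla G_1(\cdot,y)\cdot\nabla G_2(\cdot,z)\,\diff x,
\]
which is precisely the singular solution $S_0$ of Section \ref{sec: preliminary}.

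Second, I would convert the $L^2$-type information encoded in $\mathcal{J}$ into pointwise control of $S_0$ on $(D_0)_{r_0/2}\times(D_0)_{r_0/2}$. Since $\sigma^{(j)}|_{D_0}=I_n$, the maps $y\mapsto S_0(y,z)$ and $z\mapsto S_0(y,z)$ are harmonic on $D_0$. Viewing $D_y,D_z$ as part of the a-priori data and taking them to contain a fixed-size neighbourhood of $(D_0)_{r_0/2}$, two applications of the mean-value property (first in $y$, then in $z$) yield
\[
\sup_{(D_0)_{r_0/2}\times(D_0)_{r_0/2}}|S_0|\le C\,\mathcal{J}^{1/2},
\]
with $C$ depending only on the a-priori data. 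Caccioppoli's inequality in each variable gives the analogous bound for the mixed derivative $\partial_{y_h}\partial_{z_l}S_0$. If $D_y,D_z$ must be taken strictly smaller, three-sphere propagation of smallness for harmonic functions provides the same estimate modulo a Hölder exponent, to be absorbed in the final constant.

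Third, I would feed these bounds into the argument of Theorem \ref{thm: stability estimate} with $\varepsilon=\|\Lambda_1^\Sigma - \Lambda_2^\Sigma\|_*$ replaced throughout by $\mathcal{J}^{1/2}$. The identity
\[
S_0(y,z) - S_{M-1}(y,z) = \int_{\bigcup_{m=1}^{M-1}D_m}(\sigma^{(1)}-\sigma^{(2)})\,\nabla G_1(\cdot,y)\cdot\nabla G_2(\cdot,z)\,\diff x,
\]
combined with the Green's-function bounds of Theorem \ref{thm: Gbounds}, yields $|S_{M-1}(y,z)|\le C(\mathcal{J}^{1/2}+\delta_{M-1})$ on $(D_0)_{r_0/2}^2$, and similarly for its mixed second derivatives. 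Proposition \ref{prop: uc}, applied with $\varepsilon_0 = C(\mathcal{J}^{1/2}+\delta_{M-1})$, transfers this smallness to $S_{M-1}(w_M,w_M)$ and $\partial_{y_h}\partial_{z_l}S_{M-1}(w_M,w_M)$ at a point $w_M$ close to the interface $\Sigma_M$. The Alessandrini-type splitting \eqref{eqn: split}, together with the asymptotic expansion of Theorem \ref{thm: asymptotic}, then produces a one-step inequality of the form
\[
E \le C\bigl(\mathcal{J}^{1/2}+\delta_{M-1}+E\bigr)\,\omega\!\left(\frac{\mathcal{J}^{1/2}+\delta_{M-1}}{\mathcal{J}^{1/2}+\delta_{M-1}+E}\right),
\]
and $(M-1)$-fold iteration through the non-decreasing concave function $\omega$ of \eqref{eqn: omega} delivers the claimed bound $E\le C\,\mathcal{J}^{1/2}$.

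The principal obstacle is Step 2, preserving the Hölder exponent $1/2$ in the passage from $L^2$ to pointwise control of $S_0$. Harmonicity of $S_0$ in each variable on $D_0$ is essential, as it produces mean-value identities without any logarithmic-convexity loss; without a favourable choice of $D_y,D_z$ within the a-priori data, only a weaker estimate $\|\sigma^{(1)}-\sigma^{(2)}\|_{L^\infty(\Omega)}\le C\,\mathcal{J}^{\tau/2}$ for some $\tau\in(0,1)$ would follow from the three-sphere propagation.
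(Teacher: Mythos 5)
Your proposal is correct and follows essentially the same route as the paper: identify the boundary functional $S_0(y,z)$ with the volume integral $\int_\Omega(\sigma^{(1)}-\sigma^{(2)})\nabla G_1\cdot\nabla G_2$ via Alessandrini's identity, pass from the $L^2$ quantity $\mathcal{J}$ to a pointwise bound on $S_0$ over a fixed interior subset of $D_y\times D_z$ (the paper uses the local sup--$L^2$ boundedness estimate for solutions of the elliptic equation, which is exactly your mean-value/harmonicity argument since $\sigma^{(j)}|_{D_0}=I_n$), and then rerun the proof of Theorem \ref{thm: stability estimate} with $\varepsilon$ replaced by $\mathcal{J}^{1/2}$. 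Your closing worry about losing the exponent $1/2$ is unnecessary: any Hölder loss from starting the propagation-of-smallness chain on a smaller subset of $D_0$ is absorbed into the interpolation structure $(\ep_0+E)(\ep_0/(\ep_0+E))^{\tau}$ and the final inversion of the iterated modulus $\omega^{(M-1)}$, so the exponent on $\mathcal{J}^{1/2}$ survives intact.
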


The proof of \ref{teorema misfit} follows the lines of the proofs of \ref{thm: stability estimate} and \cite[Theorem 2.1]{FosGabSin21} with the following observations. For $(y,z)\in D_y\times D_z$, by applying Alessandrini's identity \eqref{eqn: alessandrini} with $u_1(x)=G_1(x,y)$ and $u_2(x)=G_2(x,z)$, we have:
\begin{equation}\label{eqn: alessandrini2}
\la (\Lambda^{\Sigma}_1 - \Lambda^{\Sigma}_2) G_2(\cdot,z)|_{\partial\Omega}, \bar{G}_1(\cdot,y)|_{\partial\Omega} \ra = \int_{\Omega} (\sigma^{(1)}-\sigma^{(2)})(x) \nabla G_1(x,y)\cdot \nabla G_2(x,z) \diff x.
\end{equation}
Due to the regularity of the Green's functions, \eqref{eqn: alessandrini2} yields
\[
S_0(y,z) = \int_{\Omega} (\sigma^{(1)}-\sigma^{(2)})(x) \nabla G_1(x,y)\cdot \nabla G_2(x,z) \diff x.
\]
Since $S_0(\cdot,z)$ and $S_0(y,\cdot)$ are weak solutions to \eqref{eqn: singluar conduct}, we can use a local boundedness result for weak solutions of a uniformly elliptic operator to bound the supremum of $S_0$ by its $L^2$-norm as follows:
\begin{equation*}
\sup_{(y,z)\in (D_y)_{r_0/4}\times (D_z)_{r_0/4}} |S_0(y,z)| \leq C r_0^{-n} \left(\int_{D_y\times D_z} |S_0(y,z)|^2 \diff y\diff z\right)^{1/2},
\end{equation*}
where the constant $C$ depends on $n$, $\lambda$ and $|\Omega|$. Hence, by setting $\ep=\mathcal{J}(\sigma^{(1)},\sigma^{(2)})$, one can proceed as in \ref{thm: stability estimate} with minor adaptations. 

\section{Appendix}
\setcounter{equation}{0}
We prove here  Theorem \ref{thm: asymptotic} and Proposition \ref{prop: uc}.
\subsection{Proof of Theorem \ref{thm: asymptotic}}
For $m\in \{1,\dots,N+1\}$, consider $D_m, D_{m+1}$ that share a flat portion $\Sigma_{m+1}$. Moreover, we assume that there is a point $P_{m+1}\in \Sigma_{m+1}$ that under a suitable rigid transformation, coincides with the origin. Let $\nu$ be the outward unit normal at $P_{m+1}$ at the boundary $\p\Omega_{m+1}$, so that $\nu = -e_n$. Let $y=-re_n$ for some $r\in (-r_0/2,0)$. 

Let $H$ be the fundamental solution associated to the operator div$(\sigma_0\nabla \cdot)$ in $\R^n$, where 
\[
\sigma_0 (x) = (\gamma_{m}(0) + (\gamma_{m+1}(0) - \gamma_m(0))\chi_{{\R^n_+}}(x)) A(0) = \gamma_0(x) A(0).
\]

Let $\tilde\Omega_0$ be the enlarged domain defined in \eqref{eqn: augmented}. For any $y\in\tilde\Omega_0$, $G(\cdot,y)$ as a Green function is the weak solution to the problem
\begin{equation*}
\begin{cases}
	\textnormal{div}(\sigma\nabla G(\cdot,y)) = - \delta_y, \qquad &\textnormal{in }\tilde\Omega_0,\\
	G(\cdot,y)\vert_{\p\tilde\Omega_0} = 0.
\end{cases}
\end{equation*}
Define
\[
R(x,y) = G(x,y) - H(x,y).
\]
The proof of \eqref{eqn: green1} follows by estimating $|R(x,y)|$. First, we proceed by reformulating the expression of $R(x,y)$ in terms of an integral representation. Notice that for any point $y\in \tilde\Omega_0$, $R(\cdot,y)$ is a weak solution to the Dirichlet problem
\begin{equation*}
\begin{cases}
	\textnormal{div}(\sigma\nabla R(\cdot,y)) = - \textnormal{div}((\sigma - \sigma_0)\nabla H(\cdot,y)), \qquad &\textnormal{in }\tilde\Omega_0,\\
	R(\cdot,y)\vert_{\p\tilde\Omega_0} = - H(\cdot,y)\vert_{\p\tilde\Omega_0}.
\end{cases}
\end{equation*}
By the representation formula on $\tilde\Omega_0$, one derives
\begin{multline*}
R(x,y) = -\int_{\tilde\Omega_0} (\sigma(z) - \sigma_0(z)) \nabla_z H(z,y)\cdot \nabla_z \bar{G}(z,x) \diff z\\ + \int_{\p\tilde\Omega_0} \sigma(z) \nabla_z G(z,x)\cdot \nu\, \bar{H}(z,y) \diff S(z).
\end{multline*}
Regarding the integral over the $(n-1)$-dimensional space $\p\tilde\Omega_0$, we apply Theorem \ref{thm: Gbounds} to derive the following upper bound in terms of a positive constant that depends on the a-priori data only:
\begin{multline*}
\left| \int_{\p\tilde\Omega_0} \sigma(z) \nabla_z G(z,x) \cdot\nu \, \bar{H}(z,y) \diff S(z)\right| \leq C\,\|\p_{\nu} G(\cdot,x)\|_{\Hdistr}\,\,\|H(\cdot,y)\|_{\Htrace}\\
\leq C\,\|G(\cdot,x)\|_{H^1(\tilde\Omega_0\setminus B_r(x))}\,\,\|H(\cdot,y)\|_{H^1(\tilde\Omega_0\setminus B_r(y))} \leq C.
\end{multline*}
Regarding the volume integral, we observe that, by the assumption on the complex conductivity,
\begin{equation}\label{eqn: sigma0}
|\sigma(z) - \sigma_0(z)| \leq |\gamma(z)|\,|A(z)-A(0)| + |\gamma(z) - \gamma_{0}(z)| |A(0)| \leq C\,|z|.
\end{equation}
By Theorem \ref{thm: Gbounds} we derive the following pointwise bounds:	
\begin{equation*}
|\nabla G(z, x)|\le C |z - x|^{1-n}, \qquad|\nabla H(z, y)|\le C |z - y|^{1-n},
\end{equation*}
which, together with \eqref{eqn: sigma0}, leads to
\begin{equation}\label{eqn: estimate R}
\left|\int_{\tilde\Omega_0} (\sigma(z)-\sigma_0(z)) \nabla H(z,y)\cdot\nabla \bar{G} (z,x)\,\diff z\right| \le {C_1} |x-y|^{2-n+\theta_1},
\end{equation}
for some $0<\theta_1<1$. The exponent $\theta_1$ is due to the estimate of the integral in \eqref{eqn: estimate R} and depends on the dimension $n$. Indeed, on the one hand, when $n=3$, \eqref{eqn: estimate R} behaves like a logarithm $\ln(1\slash |x-y|)\sim \ln(1/h)$, and for $h<1$ can be asymptotically bounded by $1\slash h^{\theta_1}$ for $\theta_1\in (0,1)$. On the other hand, when $n>3$, the upper bound has the behaviour of the function $h^{3-n}$. In conclusion, for $x\in B^+_{r_0/2}$, $y=-r e_n$ with $r\in(-r_0/2,0)$,
\begin{equation*}
|R(x,y)| \le C |x-y|^{2-n+\theta_1}.
\end{equation*}
The estimates of $|\nabla _x R(x,y)|$ and for $|\nabla_y \nabla_x R(x,y)|$ follow the lines of the proof of \cite[Theorem 3.2]{AleDeHGabSin17} \cite[Theorem 3.1]{FosGabSin21} with a minor adaptation. To estimate $|\nabla _x R(x,y)|$, for example, we define the cylinder $Q=B'_{h/4}(x')\times (x_n,x_n+h/4)$, where $h=|x-y|$. By applying \cite{LiNir03}, we derive the following bounds:
\[
|\nabla G(\cdot,x)|_{\alpha',Q}, |\nabla H(\cdot,y)|_{\alpha',Q} \leq C h^{1-n-\alpha'}.
\]
From the following interpolation inequality 
\[
\|\nabla R(\cdot,y)\|_{L^{\infty}(Q)} \leq C \left(\|R(\cdot,y)\|_{L^{\infty}(Q)}^{\frac{\alpha'}{1+\alpha'}}\cdot |\nabla R(\cdot,y)|_{\alpha',Q}^{\frac{\alpha'}{1+\alpha'}} + \frac{1}{h}\|R(\cdot,y)\|_{L^{\infty}(Q)} \right),
\]
one derives the upper bound
\[
|\nabla R(x,y)|\leq C h^{1-n+\theta_2}, \quad \theta_2\in (0,1) \textnormal{ depends on } \alpha', \alpha.
\]

\subsection{Proof of Proposition \ref{prop: uc}}\label{appendix-uc}
In this section, we provide the quantitative estimate of unique continuation (Proposition \ref{prop: uc}). Our proof is built upon the recent derivation of the three-sphere inequality and propagation of smallness for second-order elliptic equations with piecewise complex-valued Lipschitz coefficients derived in \cite{FraVesWan23}. \\
For completeness, we first recall the three-sphere inequality that holds for balls far from the discontinuity interface in Proposition \ref{prop: 3spherefar}, which corresponds to  \cite[Theorem 3]{CarNguWan20}) and \cite[Proposition 4.1]{FraVesWan23}, and then the three inequality for the general case, Proposition \ref{prop: 3spheregeneral}, which corresponds to \cite[Proposition 4.4]{FraVesWan23}. 

Let $\Omega$ be a bounded domain with Lipschitz boundary with constants $r_0, M_0$. Let $\Sigma$ be a $C^{1,1}$ hyperplane with constants $\rho_0, K_0$ such that $\Omega\setminus \Sigma$ has two connected components, $\Omega^+$ and $\Omega^-$. Let $\sigma(x)=\{\sigma_{hl}(x)\}_{h,l=1}^n$ be a Lipschitz symmetric matrix-valued function, and assume that it has a jump at $\Sigma$. Define
\[
\sigma_+(x):=\sigma(x)|_{\Omega^+} \quad \text{and}\quad \sigma_-(x)=\sigma(x)|_{\Omega^-}.
\]
Let
\[
\sigma_{\pm}(x)= \sigma^r_{\pm}(x) + i\sigma^i_{\pm}(x),
\]
and assume that there exists a constant $\lambda>1$ such that
\[
\lambda^{-1} |\xi|^2 \leq \sigma^r_{\pm}(x) \xi\cdot\xi \leq \lambda |\xi|^2, \qquad \textnormal{for all }x\in \Omega, \,\xi\in \R^n.
\]
and
\[
\lambda^{-1} |\xi|^2 \leq \sigma^i_{\pm}(x) \xi\cdot\xi \leq \lambda |\xi|^2, \qquad \textnormal{for all }x\in \Omega, \,\xi\in \R^n.
\]
The Lipschitz condition tells us that there exists $M_1>0$ such that 
\[
|\sigma_{\pm}(x) - \sigma_{\pm}(y)|\leq M_1 |x-y|, \qquad \textnormal{for any }x,y\in \Omega.
\]
Let $v\in H^1(\Omega)$ be a weak solution to
\begin{equation*}
\mbox{div}(\sigma(x)\nabla v(x)) = 0, \qquad \textnormal{for }x\in \Omega.
\end{equation*}
Let $U\subset \Omega$ be an open bounded domain such that the coefficient $\sigma$ is Lipschitz continuous in $U$ without jumps. The following proposition corresponds to \cite[Proposition 4.1]{FraVesWan23}
\begin{proposition}\label{prop: 3spherefar}
Under the above assumptions, there exist two positive constants, $R=R(n,\lambda,M_1)$ and $s=s(\lambda)$ such that if $r_1, r_2, r_3$ are positive real numbers satisfying $r_1<r_2<\lambda r_3\slash 2< \sqrt{\lambda} R\slash 2$ with $B_{r_3}(x_0)\subset U$ for some $x_0\in U$, we have
\begin{equation*}
\|v\|_{L^2(B_{r_2}(x_0))} \leq C \|v\|^{\delta}_{L^2(B_{r_1}(x_0))} \|v\|^{1-\delta}_{L^2(B_{r_3}(x_0))}
\end{equation*}
where $C$ is explicitly given by
\[
C=e^{C_1 (r_1^{-s}-r_2^{-s})}, \quad C_1=C_1(\lambda,M_1)>1,
\]
and 
\[
\delta = \frac{(2r_2/\lambda)^{-s}-r_3^{-s}}{r_1^{-s}-r_3^{-s}}.
\]
\end{proposition}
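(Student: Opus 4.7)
The plan is to reduce Theorem \ref{teorema misfit} to the machinery already developed in the proof of Theorem \ref{thm: stability estimate} by replacing the role of $\|\Lambda_1^{\Sigma}-\Lambda_2^{\Sigma}\|_*$ with $\mathcal{J}(\sigma^{(1)},\sigma^{(2)})^{1/2}$. The first step is to rewrite $S_0(y,z)$ as a volume integral. Applying Alessandrini's identity \eqref{eqn: alessandrini} with the choice $u_1(x)=G_1(x,y)$, $u_2(x)=G_2(x,z)$ (admissible because $y,z\in D_y\times D_z\subset(D_0)_{r_0/2}\times(D_0)_{r_0/2}$, so both Green's functions have trace supported in $\Sigma$ and the DtoN pairing reduces to the boundary integral appearing in $S_0$), one obtains
\[
S_0(y,z)=\int_{\Omega}\bigl(\sigma^{(1)}-\sigma^{(2)}\bigr)(x)\,\nabla G_1(x,y)\cdot\nabla G_2(x,z)\,\diff x.
\]
This identity, paired with the divergence-form equations satisfied by the Green's functions, implies that $S_0(\cdot,z)$ and $S_0(y,\cdot)$ are (locally) weak solutions of \eqref{eqn: singluar conduct} in $D_y$ and $D_z$ respectively.

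The second step is to convert the $L^2$-type control furnished by $\mathcal{J}$ into a pointwise bound. Because $S_0$ satisfies a uniformly elliptic equation in each variable in the interior of $D_y\times D_z$, standard local boundedness (Moser's estimate applied to the real and imaginary parts of the real $4\times4$ elliptic system coming from \eqref{eqn: conductivity system}) yields
\[
\sup_{(y,z)\in (D_y)_{r_0/4}\times(D_z)_{r_0/4}}|S_0(y,z)|\le C\,r_0^{-n}\left(\int_{D_y\times D_z}|S_0(y,z)|^2\,\diff y\,\diff z\right)^{1/2} = C\,r_0^{-n}\,\mathcal{J}^{1/2},
\]
with $C$ depending only on $n$, $\lambda$ and $|\Omega|$. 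Setting $\varepsilon:=C\,r_0^{-n}\mathcal{J}^{1/2}$, we have therefore recovered the key starting bound \eqref{eqn: S0} of the Lipschitz proof, namely $|S_{M-1}(y,z)|\lesssim\varepsilon+\delta_{M-1}$ on $(D_0)_{r_0/2}\times(D_0)_{r_0/2}$, but now with $\varepsilon$ given in terms of the misfit functional rather than the DtoN operator norm.

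From this point on, the proof of Theorem \ref{thm: stability estimate} runs verbatim. Namely, one fixes the index $M$ such that $E=\|\gamma^{(1)}_M-\gamma^{(2)}_M\|_{L^\infty(D_M)}$, splits $S_{M-1}(w_M,w_M)$ as $I_1+I_2$ in \eqref{eqn: split}, estimates $I_2$ by Caccioppoli, uses the Green's function asymptotics of Theorem \ref{thm: asymptotic} to extract $\|\gamma_M^{(1)}-\gamma_M^{(2)}\|_{L^\infty(\Sigma_M\cap B_{r_0/2}(P_M))}$ from $I_1$, bounds $|I_1|$ from above by \eqref{eqn: singular 21}, and likewise handles the normal derivative using \eqref{eqn: singular 22}. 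Iterating the resulting inequality $E\le C(\varepsilon+\delta_{M-1}+E)\,\omega\!\left(\tfrac{\varepsilon+\delta_{M-1}}{\varepsilon+\delta_{M-1}+E}\right)$ through the chain of $M-1$ layers and composing $\omega$, one arrives at $E\le C'\varepsilon$, i.e.
\[
\|\sigma^{(1)}-\sigma^{(2)}\|_{L^\infty(\Omega)}\le C\,\mathcal{J}(\sigma^{(1)},\sigma^{(2)})^{1/2}.
\]

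The main obstacle I expect is the local boundedness step: one needs to treat $S_0$ as a solution of a uniformly elliptic real $4\times 4$ system (since each $G_j$ is $\C$-valued and corresponds to the first column of the Green matrix $\GG$ in \eqref{eqn: greenmatrix}) and justify a Moser-type interior estimate in each variable separately while keeping the constant dependent only on the a-priori data. Once this is in place, the rest is a mechanical transcription of the argument of Section \ref{sec4} with $\varepsilon=\mathcal{J}^{1/2}$.
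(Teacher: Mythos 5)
Your proposal does not address the statement you were asked to prove. The statement is Proposition \ref{prop: 3spherefar}, a three-sphere inequality for weak solutions $v$ of $\mbox{div}(\sigma\nabla v)=0$ on a ball $B_{r_3}(x_0)$ contained in a region $U$ where the complex Lipschitz coefficient has no jump, with an explicit constant $C=e^{C_1(r_1^{-s}-r_2^{-s})}$ and explicit interpolation exponent $\delta$. What you have written instead is a proof sketch of Theorem \ref{teorema misfit}, the H\"older stability estimate in terms of the misfit functional: you rewrite $S_0$ via Alessandrini's identity, apply a local boundedness estimate to replace the $L^2$ control by a sup bound, and then rerun the argument of Section \ref{sec4} with $\varepsilon=\mathcal{J}^{1/2}$. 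That outline is broadly consistent with how the paper handles Theorem \ref{teorema misfit}, but it has no bearing on the three-sphere inequality: nothing in your argument produces a doubling/Carleman-type mechanism, an interpolation exponent $\delta$ depending on $r_1,r_2,r_3$, or the exponential dependence of $C$ on $r_1^{-s}-r_2^{-s}$.

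For the record, the paper does not prove Proposition \ref{prop: 3spherefar} at all; it imports it directly from the literature (it corresponds to Theorem 3 of C\^arstea--Nguyen--Wang \cite{CarNguWan20} and Proposition 4.1 of Francini--Vessella--Wang \cite{FraVesWan23}, where it is established via Carleman estimates for second-order elliptic operators with complex Lipschitz coefficients). If you intend to supply a proof, you would need to work at that level --- a Carleman or frequency-function argument for the complex conductivity operator away from the interface --- rather than the stability machinery of Section \ref{sec4}, which presupposes this proposition as an ingredient (through Proposition \ref{prop: 3spheregeneral} and the propagation of smallness in Proposition \ref{prop: uc1}).
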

Now we introduce the three-sphere inequality for the general case, which is an adaptation of \cite[Proposition 4.4]{FraVesWan23} to our setting.

Choose an index $m\in \{1,\dots,N\}$ and consider the domains $D_{m}, D_{m+1}$ defined in Section \ref{sec: apriori}. Set
$$\Omega=\overset{\circ}{(\overline{D_{m}\cup D_{m+1}})},$$
and let
$$\sigma_-(x) = \sigma_{m}(x),\quad \sigma_+(x) = \sigma_{m+1}(x)$$ 
be the Lipschitz continuous matrix-valued coefficients that satisfy the assumptions of Section \ref{sec: apriori}. Let $\Sigma = \p \Omega_{m+1}$ be the $C^{1,1}$ interface with constant $r_0, K_0$. Let $v\in H^1(\Omega)$ be a weak solution to
\begin{equation}\label{eqn: cond complex}
\mbox{div}((\sigma_-(x) + (\sigma_+(x)-\sigma_-(x)) \chi_{D_{m+1}}(x))\nabla v(x)) = 0, \qquad \mbox{in }\Omega.
\end{equation}

\begin{proposition}\label{prop: 3spheregeneral}
Under the above assumptions, there exists $\bar{r}>0$ depending on $r_0, K_0$ such that if $0<r_1<r_2<r_3<\bar{r}$, for $Q\in \Omega$ such that $\textnormal{dist}(Q,\p \Omega)>r_3$, then there exist $C>0$ and $0<\delta<1$ such that
\begin{equation*}
\|v\|_{L^2(B_{r_2}(Q))} \leq C \|v\|^{\delta}_{L^2(B_{r_1}(Q))} \|v\|^{1-\delta}_{L^2(B_{r_3}(Q))}
\end{equation*}
with
\[
C=C_1 e^{C_2 r_3^{-s}},
\]
where $C_1, C_2$ and $\delta$ depend on $\lambda, r_0, M_0, K_0, \frac{r_1}{r_3}, \frac{r_2}{r_3}, \textnormal{diam}(\Omega)$.
\end{proposition}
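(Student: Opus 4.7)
The plan is to reduce the statement to an already-established three-sphere inequality across a $C^{1,1}$ interface for $2\times 2$ elliptic systems with piecewise-Lipschitz coefficients, and to split the argument into two regimes according to the relative position of the centre $Q$ and the discontinuity surface $\Sigma=\partial\Omega_{m+1}$.

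\textbf{Step 1: Rewriting the equation as a real elliptic system.} Write $v = v^r + i v^i$ and, as in \eqref{eqn: uvector}--\eqref{eqn: tensorC}, encode \eqref{eqn: cond complex} as $\mbox{div}(\CC(x)\nabla \vv)=0$ in $\Omega$ with $\vv=(v^r,v^i)^T$. The tensor $\CC$ is Lipschitz on each of the two subdomains $D_m$, $D_{m+1}$, jumps across $\Sigma$, and satisfies the uniform ellipticity and boundedness conditions recalled in Section \ref{sec: preliminary}, with constants controlled by the \textit{a-priori} data. This brings our problem exactly into the framework of \cite{FraVesWan23}, which deals with real second-order strongly elliptic systems with piecewise-Lipschitz coefficients admitting a jump across a $C^{1,1}$ surface.

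\textbf{Step 2: Case 1, $Q$ far from the interface.} Choose $\bar r>0$ small enough, depending on $r_0$ and $K_0$, so that for every $Q\in\Omega$ with $\mbox{dist}(Q,\p\Omega)>r_3$ one can meaningfully compare $\mbox{dist}(Q,\Sigma)$ to $r_3$. If $\mbox{dist}(Q,\Sigma)\geq r_3$, then $B_{r_3}(Q)$ is entirely contained in one of the layers $D_m$ or $D_{m+1}$, so $\CC$ is Lipschitz in a neighbourhood of $B_{r_3}(Q)$ without jumps. Applying Proposition \ref{prop: 3spherefar} componentwise to $\vv$ (and absorbing the $\sqrt 2$ loss between $\|v\|_{L^2}$ and $\|\vv\|_{L^2}$ into $C_1$) yields the desired estimate.

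\textbf{Step 3: Case 2, $Q$ close to the interface.} When $\mbox{dist}(Q,\Sigma)<r_3$, pick the nearest point $Q_0\in\Sigma$ and use a $C^{1,1}$ diffeomorphism $\Phi$, with constants controlled by $r_0, K_0$, that flattens $\Sigma\cap B_{2r_3}(Q_0)$ to a piece of hyperplane. The pushforward system $\mbox{div}((\Phi_*\CC)\nabla \tilde \vv)=0$ has piecewise Lipschitz coefficients with a jump across a hyperplane, and inherits uniform ellipticity with constants depending only on the \textit{a-priori} data. In these flattened coordinates one is exactly in the situation of \cite[Proposition 4.4]{FraVesWan23}. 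Applying that result at the transformed centre with radii $(r_1,r_2,r_3)$ and pulling back through $\Phi$ gives the stated three-sphere inequality; the distortion introduced by $\Phi$ affects only the multiplicative constants $C_1, C_2$ and the exponent $\delta$, which continue to depend only on $\lambda$, $r_0$, $M_0$, $K_0$, the ratios $r_1/r_3$, $r_2/r_3$, and $\mbox{diam}(\Omega)$.

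\textbf{Main obstacle and closing remarks.} The genuine work is absorbed in the quoted Proposition \ref{prop: 3spheregeneral} of Francini--Vessella--Wang, whose proof rests on a Carleman estimate with a carefully engineered weight that is singular at a point on the hyperplane interface and whose Hessian dominates the coefficient jump in the commutator computation. The principal point I would have to check in detail is that their Carleman weight and its associated convexification are still admissible for our $\CC$, which is a $2\times 2$ block system built from the complex admittivity rather than a scalar complex-valued equation; however, the block diagonal structure of the real and imaginary parts of $\CC$ in \eqref{eqn: tensorC}, together with the symmetry of $\sigma^r,\sigma^i$, makes this a cosmetic adaptation that does not alter the form of the constants. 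Finally, the explicit form $C=C_1 e^{C_2 r_3^{-s}}$ and the expression of $\delta$ in terms of the ratios $r_j/r_3$ are just the tracked output of the iteration/covering step used to propagate the local Carleman inequality to balls of radius $r_2$, exactly as in \cite{FraVesWan23}.
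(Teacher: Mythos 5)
The paper offers no proof of this proposition at all: it is stated verbatim as ``an adaptation of [Proposition~4.4] of \cite{FraVesWan23} to our setting,'' so the entire content is the quotation, and your plan --- which also ultimately defers all the real work to Francini--Vessella--Wang --- is in the same spirit. However, two of your intermediate moves are misdirected. First, the detour through the real $2\times 2$ system $\divrg(\CC\nabla\vv)=0$ and the concern you flag about whether the Carleman weight of \cite{FraVesWan23} remains admissible for a block system are unnecessary: the cited result is proved directly for the scalar second-order equation with discontinuous \emph{complex} Lipschitz conductivity, i.e.\ exactly for equation \eqref{eqn: cond complex} as written, so no reduction to a real system (and no re-examination of the commutator computation) is needed; your ``principal point to check in detail'' is in fact already covered by the quoted theorem. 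Second, in your Step~2 the phrase ``applying Proposition \ref{prop: 3spherefar} componentwise to $\vv$'' does not parse: $v^r$ and $v^i$ are coupled and neither solves a scalar elliptic equation on its own, so no componentwise three-sphere inequality is available --- but again none is needed, since Proposition \ref{prop: 3spherefar} is already stated for the complex-valued solution $v$ itself (and incidentally $\|\vv\|_{L^2}=\|v\|_{L^2}$ exactly, with no $\sqrt 2$ loss). Neither issue is fatal, because the correct argument is strictly simpler than the one you propose: quote \cite[Proposition 4.4]{FraVesWan23} for the complex scalar equation, with the case distinction on $\textnormal{dist}(Q,\Sigma)$ and the flattening of the $C^{1,1}$ interface handled as in that reference (which is where the dependence of $C_1$, $C_2$, $\delta$ on $K_0$ and the ratios $r_1/r_3$, $r_2/r_3$ comes from).
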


In the following proposition, we give a propagation of smallness result for \eqref{eqn: cond complex} needed for the proof of Proposition \ref{prop: uc}. A similar result can be found in \cite[Proposition 5.2]{Fos23}.

\begin{proposition}\label{prop: uc1}
For $k\in \{1,\dots,N\}$, let $v\in H^1(\mathcal{W}_k)$ be a weak solution to
\[
\textnormal{div}(\sigma \nabla v) = 0, \qquad \textnormal{in }\mathcal{W}_k,
\]
where $\sigma$ is either equal to $\sigma^{(1)}$ or $\sigma^{(2)}$. Assume that for given real numbers $\ep_0, E_0>0$, $v$ satisfies the following bounds:
\begin{equation}
\|v\|_{L^{\infty}((D_0)_{r_0\slash 2})} \leq \ep_0,
\end{equation}
with $(D_0)_{r_0\slash 2}$ as in \eqref{K00} and
\begin{equation}
|v(x)| \leq (E_0+\ep_0) (\textnormal{dist}(x,\mathcal{U}_k))^{-\tilde\gamma}, \qquad \text{for any } x\in \mathcal{W}_k,
\end{equation}
where $\tilde \gamma = n\slash 2-1$. Then the following inequality holds:
\begin{equation}\label{eqn: prop6-6}
|v(\tilde x)| \leq C r^{-\tilde\gamma} (\ep_0 +E) \Big(\frac{\ep_0}{\ep_0 + E}\Big)^{\tau_r \beta^{N_1}},
\end{equation}
where:
\begin{itemize}
    \item $C$, $N_1\in \N$ are positive constants depending on $r_0, L, \lambda, \bar{A}$;
    \item $\tilde x = P_{k+1} + r\nu(P_{k+1})$, $P_{k+1}\in (\Sigma_{k+1})_{r_0/3}$ and $\nu(P_{k+1})$ is the outward unit normal of $\p \Omega_{k+1}$ at the point $P_{k+1}$;
    \item $\beta, \tau_r \in (0,1)$.
\end{itemize}
\end{proposition}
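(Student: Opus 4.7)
The plan is to propagate the smallness of $v$ from the exterior set $(D_0)_{r_0/2}$ to the target point $\tilde x$ by iterating the three-sphere inequality along a finite chain of balls through the layers $D_0, D_1, \dots, D_k$ of $\mathcal{W}_k$, and then converting the resulting $L^2$-bound into a pointwise estimate via interior regularity. This is a standard propagation of smallness scheme in the spirit of Alessandrini--Rondi--Rosset--Vessella, adapted to the present piecewise Lipschitz complex setting through Propositions \ref{prop: 3spherefar} and \ref{prop: 3spheregeneral}. I would first fix $x_0\in(D_0)_{r_0/2}$ and construct a chain $x_0,x_1,\dots,x_{N_1}=\tilde x$ of centers in $\mathcal{W}_k$ with associated triples of radii, chosen so that each large ball stays inside $\mathcal{W}_k$ and meets at most one interface $\Sigma_m$, so that either Proposition \ref{prop: 3spherefar} or Proposition \ref{prop: 3spheregeneral} applies at every step. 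The connectedness of each layer $D_m$ and the availability of a flat portion of $\p\Omega_m$ of size $r_0/3$ ensure that such a chain can be built with $N_1$ depending only on the a-priori data. For $j=0,\dots,N_1-2$ the radii are comparable to $r_0$; in the last step, whose center lies at distance of order $r$ from $\Sigma_{k+1}$, the radii are taken of order $r$, which forces the last three-sphere exponent to become the $r$-dependent quantity $\tau_r\in(0,1)$ whose explicit form is dictated by Proposition \ref{prop: 3spheregeneral}.

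Next, at each node I would apply a three-sphere inequality of the form
\[
\|v\|_{L^2(B_{\rho_j/2}(x_j))} \le C\,\|v\|_{L^2(B_{\rho_j/4}(x_j))}^{\beta}\,\|v\|_{L^2(B_{\rho_j}(x_j))}^{1-\beta},
\]
with $\beta\in(0,1)$ uniform in the a-priori data, then chain these estimates through the overlap inclusions $B_{\rho_j/4}(x_{j+1})\subset B_{\rho_j/2}(x_j)$. The initial hypothesis $\|v\|_{L^{\infty}((D_0)_{r_0/2})}\le\ep_0$ controls the starting small-ball term by $C\,r_0^{n/2}\ep_0$, while the growth assumption $|v(x)|\le(E_0+\ep_0)\,\textrm{dist}(x,\mathcal{U}_k)^{-\tilde\gamma}$ controls every large-ball term, uniformly in $j$, by a constant multiple of $E_0+\ep_0$ up to powers of the radii. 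After $N_1-1$ iterations at scale $r_0$ and the last iteration at scale $r$, the exponent on the small-ball factor has been raised to $\tau_r\,\beta^{N_1}$, and one obtains an $L^2$-estimate on a ball of radius comparable to $r$ around $\tilde x$ of the form
\[
\|v\|_{L^2(B_r(\tilde x))} \le C\,r^{n/2}\,(\ep_0+E_0)\,\left(\frac{\ep_0}{\ep_0+E_0}\right)^{\tau_r\beta^{N_1}}.
\]

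Finally, to pass from the $L^2$-bound on $B_r(\tilde x)$ to the pointwise value $|v(\tilde x)|$, I would invoke the local boundedness for weak solutions of divergence form equations with piecewise Lipschitz coefficients of Li--Nirenberg \cite{LiNir03}, which on the ball $B_r(\tilde x)$ yields $|v(\tilde x)|\le C\,r^{-n/2}\|v\|_{L^2(B_r(\tilde x))}$. Together with the trivial pointwise bound $|v(\tilde x)|\le(E_0+\ep_0)\,r^{-\tilde\gamma}$ supplied by the growth hypothesis evaluated at $\tilde x$, this combination produces the prefactor $r^{-\tilde\gamma}$ and yields \eqref{eqn: prop6-6}. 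The main obstacle I expect is the careful bookkeeping in the construction of the chain: one must ensure at every step that each large ball either avoids all interfaces or is bisected by exactly one $\Sigma_m$, so that Proposition \ref{prop: 3spheregeneral} applies in its stated form, and must then track how the quantitative formulas for the Hölder exponent $\delta$ in Propositions \ref{prop: 3spherefar}--\ref{prop: 3spheregeneral} combine along the chain to produce precisely the exponent $\tau_r\,\beta^{N_1}$ with the claimed dependence on $r$.
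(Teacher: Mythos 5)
Your overall scheme is the right one and is, in substance, what the paper itself does: the paper's proof of this proposition is a one-line deferral to \cite[Proposition 3.3]{Fos23}, whose content is exactly the chain-of-balls, iterated three-sphere argument you describe, built on Propositions \ref{prop: 3spherefar} and \ref{prop: 3spheregeneral} and closed by an interior $L^{\infty}$--$L^2$ bound together with the growth hypothesis. So in outline there is nothing to object to.

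There is, however, one concrete point where the mechanism you describe would fail as written: you attribute the $r$-dependent factor $\tau_r$ in the exponent to ``the last step'', with all three radii taken of order $r$ around a center at distance of order $r$ from $\Sigma_{k+1}$. Two things go wrong with a single three-sphere step at scale $r$. First, the exponent $\delta$ in Propositions \ref{prop: 3spherefar}--\ref{prop: 3spheregeneral} depends only on the ratios $r_1/r_3$, $r_2/r_3$ (and the a-priori data), so shrinking all three radii proportionally to $r$ produces an $r$-\emph{independent} exponent, not a quantity $\tau_r$ that degenerates as $r\to 0$. Second, the multiplicative constant in Proposition \ref{prop: 3spheregeneral} is $C_1 e^{C_2 r_3^{-s}}$, which blows up like $e^{Cr^{-s}}$ when $r_3\sim r$ and would swamp the estimate. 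The actual source of $\tau_r$, as in \cite{AleVes05} and \cite{Fos23}, is a secondary iteration: after propagating smallness at the fixed scale $r_0$ up to a ball at distance comparable to $r_0$ from $\Sigma_{k+1}$ near $P_{k+1}$ (this is where $\beta^{N_1}$ is produced), one approaches $\tilde x$ through a further chain of balls whose radii shrink geometrically, each comparable to the distance of its center from the interface, so that each step uses a three-sphere inequality with a scale-invariant exponent $\delta$ and a controlled constant; the number of such steps is of order $\log(r_0/r)$, and the accumulated exponent $\delta^{\log(r_0/r)}=:\tau_r$ is where the $r$-dependence comes from, with the prefactor $r^{-\tilde\gamma}$ supplied by the growth hypothesis evaluated on the large balls of this sub-chain. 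With that correction, and the minor care of applying the final $L^{\infty}$--$L^2$ bound on a ball such as $B_{r/2}(\tilde x)$ that stays inside $\mathcal{W}_k$, your argument matches the intended proof.
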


\begin{proof}[Proof of Proposition \ref{prop: uc1}]
The proof follows the lines of \cite[Proposition 3.3]{Fos23} with minor adaptation to the conductivity case.
\end{proof}

\begin{proof}[Proof of Proposition \ref{prop: uc}]
Recall that
\[
\ep=\|\Lambda_1-\Lambda_2\|_*\quad \text{and}\quad E=\|\gamma^{(1)}-\gamma^{(2)}\|_{L^{\infty}(\Omega)}.
\]
Fix $z\in (D_0)_{r_0\slash 2}$ and set $v(y)= S_k(y,z)$ for $y\in \mathcal{W}_k$. Then, $v$ is a weak solution to 
\[
\textnormal{div}_y(\sigma^{(1)}\nabla_y v) = 0, \qquad \textnormal{in }\mathcal{W}_k.
\]
By the Alessandrini identity \eqref{eqn: alessandrini} we derive a bound for the $L^{\infty}$-norm of $v$ in a subset of $D_0$:
\[
\|v\|_{L^{\infty}((D_0)_{r_0\slash 2})} \leq C\ep,
\]
and by \eqref{eqn: Skupper}, a pointwise bound for $v$ in $\mathcal{W}_k$:
\[
|v(y)| \leq C\,E\,(\textnormal{dist}(y,\Sigma_{k+1}))^{1-n\slash 2},\qquad \textnormal{for any }y\in \mathcal{W}_k.
\]
Since the hypotheses of Proposition \ref{prop: uc1} are fullfilled, we can apply \eqref{eqn: prop6-6} at the point $\tilde x = P_{k+1} + r\nu(P_{k+1})$ and derive the estimate
\[
|S_k(\tilde x, z)| \leq C r^{-\tilde\gamma} (\ep_0 +E) \Big(\frac{\ep_0}{\ep_0 + E}\Big)^{\tau_r \beta^{N_1}}.
\]
Now, set $\tilde v(z) = S_k(\tilde x, z)$ for $z\in \mathcal{W}_k$. Hence, $\tilde v$ is a weak solution to 
\[
\textnormal{div}_z(\sigma^{(2)}\nabla_z \tilde v) = 0, \qquad \textnormal{in }\mathcal{W}_k.
\] 
By applying again Proposition \ref{prop: uc1} to $\tilde v$, we derive
then
\[
|S_k(\tilde x, \tilde x)| \leq C r^{-2 \tilde\gamma} (\ep_0 +E) \Big(\frac{\ep_0}{\ep_0 + E}\Big)^{\tau_r^{2} \beta^{2 N_1}}.
\]
For the  interior estimate of the gradient of $S_k$ \eqref{eqn: singular 22}, the argument follows the lines of \cite[Proposition 3.3]{Fos23}, but for the interior estimates in the $L^{\infty}$-norm, we refer to \cite[Theorem 1, Section 4]{DouNir55}.
\end{proof}

\section*{Acknowledgments}

The work of ES was supported by PRIN 2022 n. 2022B32J5C funded by MUR, Italy, and by the European Union – Next Generation EU. ES has also been supported by Gruppo Nazionale per l'Analisi \text{Matematica,} la Probabilità e le loro applicazioni (GNAMPA) by the grant "Problemi inversi per equazioni alle derivate parziali" CUP\_E53C22001930001. SF research was also funded by the Austrian Science Fund (FWF) SFB 10.55776/F68 "Tomography Across the Scales", project F68-01. The work of RG was partly supported by Science Foundation Ireland under grant number 16/RC/3918. RG would like to thank the Isaac Newton Institute for mathematical Sciences, Cambridge, for support and hospitality during the programme Rich and Nonlinear Tomography - a multidisciplinary approach, where work on this paper was partly undertaken. This work was supported by EPSRC grant EP/R014604/1. The work of ES was supported by PRIN 2022 n. 2022B32J5C "Inverse Problems in PDE: theoretical and numerical analysis" funded by MUR, Italy, and by the European Union Next Generation EU.


\end{document}